\renewcommand*\l@section{\@dottedtocline{1}{1.5em}{2.3em}}
\theoremstyle{plain}
\newtheorem{theorem}{Theorem}
\newtheorem{proposition}[theorem]{Proposition}
\newtheorem{lemma}[theorem]{Lemma}
\newtheorem{Proposition}[theorem]{Proposition}
\newtheorem{example}[theorem]{Example}
\newtheorem{corollary}[theorem]{Corollary}
\theoremstyle{definition}
\newtheorem{definition}{Definition}
\newtheoremstyle{myrem}
 {3pt}
 {3pt}
 {\normalsize}
 { }
 {\itshape}
 {:}
 { }
 {}
 \theoremstyle{myrem}
 \newtheorem{remark}{Remark}
 \appto\remark{\leftskip\parindent}
 \appto\remark{\rightskip\parindent}
\numberwithin{equation}{section}
\numberwithin{theorem}{section}
\begin{document}
\title{}
 \author{Shiquan Ren}

\begin{center}
{\Large {\textbf {The Embedded  Homology  of Hypergraphs  and  Applications
}}}
 \vspace{0.58cm}

Stephane Bressan,  Jingyan Li,   Shiquan Ren,          Jie Wu 

\end{center}

{ 
\begin{quote}
\begin{abstract}
\medskip
Hypergraphs are   mathematical models for many problems in data sciences.  In recent decades, the topological properties of hypergraphs have been studied and  various kinds of (co)homologies   have been  constructed (cf. \cite{hg,betti,parks}).  In this paper,  generalising the usual homology of simplicial complexes, we define the embedded  homology  of hypergraphs   as well as the persistent embedded  homology  of sequences of hypergraphs.   As a generalisation of the Mayer-Vietoris sequence for the homology of simplicial complexes, we give a Mayer-Vietoris  sequence for the embedded homology of hypergraphs.  Moreover,  as applications of the   embedded homology,  we study  acyclic hypergraphs and  construct some indices for the data analysis of hyper-networks. 
   
 \end{abstract}

\end{quote}
}
 


\footnotetext[1]
{{\bf{AMS Mathematical Classifications 2010}}.  	 Primary  55U10, 	55U15;     Secondary 68P05,  68P15.}

\footnotetext[2]
{{\bf{Keywords}}.       hypergraph,  acyclic hypergraph, homology,  persistent homology,  Mayer-Vietoris sequence, hyper-network   }
 



\section{Introduction}\label{sec9.0}


Given a set $V$, we define its   {\bf  power set}  $\Delta[V]$ as the collection of all the non-empty subsets of $V$.  Throughout this paper, we assume that   $V$ is a finite and totally ordered set.  For any subset $\{v_0,\cdots,v_n\}$ of $V$, we assume $v_0\prec \cdots \prec v_n$ if there is no extra claim, where $\prec$ is the total order of $V$.   
  
In topology, simplicial complexes are   classical models  for triangulated topological spaces.  An {\bf (abstract) simplicial complex} $K$ is a pair $(V_K,K)$ where $V_K$ is a set and $K$ is a subset of $\Delta[V_K]$  satisfying the following conditions (cf. \cite[p. 107]{hatcher}):

(i).  for any $v\in V_K$, the single-point set $\{v\}$ is in  $K$; 

(ii).  for any $\sigma\in K$ and any non-empty subset $\tau\subseteq \sigma$,     $\tau$ is in  $K$. 

\noindent  The elements of $V_K$ are called {\bf vertices}, and the elements of $K$ are called {\bf simplices}.  A simplex   consisting of $n+1$ elements in $V$, $n\geq 0$, is called an {\bf $n$-simplex}. By (i), we can identify the $0$-simplices of $K$  with the  {vertices}.  Hence 
  the simplicial complex $(V_K,K)$ can be simply denoted as $K$.  For any $n\geq 0$,  the collection of all $n$-simplices in $K$ are denoted as $K_n$.      The {\bf dimension} of a simplicial complex $K$ is the largest integer $n$ such that $K_n$ is non-empty.  For any $n\geq 1$, a {\bf$(n-1)$-face} of an $n$-simplex is an $(n-1)$-simplex  obtained by removing one vertex in the $n$-simplex.

Let $G$ be an abelian group.  Given a non-empty  finite set $S$, we use $G(S)$ to denote the collection of all formal linear combinations of the elements in $S$ with coefficients in $G$. In particular,     $\mathbb{Z}(S)$ is the free $\mathbb{Z}$-module generated by $S$.

\begin{example}\cite[pp. 103 - 105]{hatcher}\label{ex9.1.1}
A {\bf\it standard $n$-simplex}   is denoted as 
\begin{eqnarray*}
\Delta^n=\{v_0,\cdots,v_n\}. 
\end{eqnarray*}
The $(n-1)$-faces of the standard $n$-simplex $\Delta^n$ are denoted as 
\begin{eqnarray*}
 \Delta^{n-1}_i=\{v_0,\cdots,\hat{v}_i,\cdots,v_n\},  \text{\ \ \ } 0\leq i\leq n.
 \end{eqnarray*} 
We have the face maps  $d_i$ sending   $\Delta^n$ to  $\Delta^{n-1}_i$, for $0\leq i\leq n$.  And we have the boundary maps 
\begin{eqnarray*}
\partial_n: G(\Delta^n)\longrightarrow G(\Delta^{n-1}_0,\cdots, \Delta^{n-1}_n)
 \end{eqnarray*}
 given by $\partial_n=\sum_{i=0}^n (-1)^i d_i$,  which extends linearly over $G$.  The power set of $\Delta^n$ is denoted as $\Delta[n]$,  called {\bf\it the  standard simplicial complex spanned by $n+1$ vertices}. 
\end{example}
 
 \smallskip

Hypergraph was invented as a model for hyper-networks by data scientists. Mathematically,  the hypergraph  is a  generalisation of the notion of the simplicial complex.   A {\bf hypergraph}  is a pair $(V_\mathcal{H},\mathcal{H})$ where $V_\mathcal{H}$ is a set  and $\mathcal{H}$ is a subset of  $\Delta[V_\mathcal{H}]$  (cf. \cite{berge,parks}).  An element of $V_\mathcal{H}$ is called a {\bf vertex} and an element of $\mathcal{H}$ is called a {\bf hyperedge}. For any $n\geq 0$, we call a hyperedge consisting of $n+1$ vertices   an {\bf $n$-hyperedge}, and we  denote $\mathcal{H}_n$ as the collection of  all the $n$-hyperedges of $\mathcal{H}$. We define the {\bf dimension} of  $\mathcal{H}$ as the largest integer $n$ such that $\mathcal{H}_n$ is non-empty.  
In this paper, we assume that $V_\mathcal{H}$ is the union of all the vertices of the hyperedges of $\mathcal{H}$ and we simply denote the hypergraph $(V_\mathcal{H},\mathcal{H})$ as $\mathcal{H}$.   

 Given a hypergraph $\mathcal{H}$, if for any hyperedge $\sigma\in\mathcal{H}$,   any non-empty subsets of  $\sigma$ are hyperedges of $\mathcal{H}$, then $\mathcal{H}$ is a simplicial complex,  and the hyperedges of $\mathcal{H}$ are  simplices.

\smallskip

In recent decades, various   (co)homology  theories of hypergraphs have been intensively studied. In 1991, A.D. Parks and S.L. Lipscomb \cite{parks}  defined the associated simplicial complex of a hypergraph, that is, the minimal simplicial complex that a hypergraph can be embedded in.  They also studied the  homology  of the associated simplicial complex.  In 1992,  F.R.K. Chung and R.L. Graham \cite{hg} constructed certain cohomology for hypergraphs, with mod $2$ coefficients, in a combinatorial way. In 2009, E. Emtander \cite{betti} constructed certain simplicial complexes  for hypergraphs (called the independence simplicial complexes) and studied the homology of these simplicial complexes;  and J. Johnson   \cite{com1}  applied the topology of hypergraphs  to study hyper-networks of complex systems.  

A graph is a hypergraph whose hyperedges consist of at most two vertices. And a directed graph, or simply called a digraph,  is the geometric object obtained   by associating a direction  with each edge of a graph.    Since 2012,  the  homology  theory of graphs and digraphs has attracted the attention of A. Grigor'yan, Y. Lin, Y. Muranov and S.T. Yau  \cite{yau3,yau1,yau4,yau2,yau5}.

\smallskip

In this paper, by generalising \cite{yau1} and using the associated simplicial complexes defined in \cite{parks}, we construct the embedded  homology  of hypergraphs and study the persistent embedded  homology  of sequences of hypergraphs.  In particular, if the hypergraph is a simplicial complex, then the embedded homology coincides with the usual homology.  Moreover, generalising the Mayer-Vietoris sequence for the homology of simplicial complexes (cf. \cite[pp. 149 - 153]{hatcher}),  we give a Mayer-Vietoris sequence for the embedded homology as  well as a persistent version of Mayer-Vietoris sequence  for the persistent embedded homology of hypergraphs.  Furthermore, we apply  the associated simplicial complexes and the embedded homology to characterise an important family of hypergraphs:  the acyclic hypergraphs.  Finally, as applications of the embedded homology of hypergraphs in data sciences, we construct some topological indices for the data analysis of hyper-networks. 

The paper is organised as follows. In Section~\ref{sec88}, by generalising the definition of the homology of (directed) graphs given by A. Grigor'yan, Y. Lin, Y. Muranov and S.T. Yau \cite[Section~3.3]{yau1},    we construct certain  homology  for graded groups embedded in chain complexes.  In Section~\ref{sec9.2},  we define  the embedded  homology  of hypergraphs using the associated simplicial complexes defined in \cite{parks}. Moreover,  under certain conditions, we give a Mayer-Vietoris sequence for the embedded  homology  of hypergraphs in Theorem~\ref{c9.3.3.1}.   In Section~\ref{sec9.7},  we study the persistent embedded  homology  of sequences hypergraphs and give a persistence version of the Mayer-Vietoris sequence  in Theorem~\ref{c9.4.8}.  
In Section~\ref{sec9.5}, we use the associated simplicial complexes and the embedded homology to study   acyclic hypergraphs. We give some  characterisations for the associated simplicial complexes of  acyclic hypergraphs in Theorem~\ref{th9.5.2}.  And we study the embedded homology of a particular family of acyclic hypergraphs in Subsection~\ref{subsec9.5.2}.  In Section~\ref{sec9.6}, we apply the embedded homology of hypergraphs and construct some indices to measure the connectivity of hyper-networks, the differentiation of hyper-networks with respect to a function on the vertices, and the correlation of two functions on the vertices of hyper-networks.  



 \section{Homology  of Graded Groups Embedded in  Chain Complexes}\label{sec88}

In this section, we construct  the  homology  of  graded groups which are embedded in chain complexes.  

\smallskip

Firstly, we review the homology of  chain complexes.  Let  $C_n$, $n=0,1,2,\cdots,$ be a sequence of abelian groups such that there exists a sequence of  homomorphisms  (called {\bf boundary maps})
\begin{eqnarray}\label{chain1}
\cdots \overset{\partial_{n+2}}{\longrightarrow}C_{n+1}\overset{\partial_{n+1}}{\longrightarrow} C_n \overset{\partial_{n}}{\longrightarrow}C_{n-1}\overset{\partial_{n-1}}{\longrightarrow}\cdots \overset{\partial_{2}}{\longrightarrow} C_1 \overset{\partial_{1}}{\longrightarrow}C_0\overset{\partial_{0}}{\longrightarrow}0 
\end{eqnarray}
with $\partial_n\circ \partial_{n+1}=0$ for each $n$.  Such a  sequence  (\ref{chain1}) of abelian groups and homomorphisms  
 is called a {\bf chain complex} (cf. \cite[p. 106]{hatcher}).  Both the intersection and the direct sum of a family of chain complexes  are still chain complexes.  The {\bf $n$-th homology}  of the chain complex (\ref{chain1}) is defined as the quotient group\footnotemark[3] (cf. \cite[p. 106]{hatcher})\footnotetext[3]{We use $(\cdot)_*$ to denote a sequence of objects, where $*=0,1,2,\cdots$.	 } 
 \begin{eqnarray*}
H_n(\{C_*, \partial_*\})=\text{Ker}\partial_n/\text{Im}\partial_{n+1}. 
\end{eqnarray*}

 \smallskip

Secondly, we consider graded groups embedded in chain complexes and define the infimum chain complexes and the supremum chain complexes.  For each $n\geq 0$, let $D_n$ be a subgroup of $C_n$. In particular,  if for each $n\geq 1$, $\partial D_n\subseteq D_{n-1}$, then we call the sequence $D_*$ a {\bf subchain complex} of $C_*$.    Given a subchain complex of a chain complex, the sequence of quotient groups, equipped with the respective quotient maps,  is still a chain complex.

\begin{definition}\label{def9.2.2}
Given a graded group $D_*$ embedded in a chain complex $C_*$, the {\bf infimum chain complex} $\text{Inf}_*(D_*,C_*)$ of the sequence $\{D_*,C_*\}$ is the chain complex
\begin{eqnarray*}
\text{Inf}_n(D_*,C_*)=\sum \{C_n'\mid C_*' \text{ is a subchain complex of }  C_*  \text{ and }  C'_n\subseteq D_n\}.
\end{eqnarray*}
\end{definition}
It follows immediately from Definition~\ref{def9.2.2} that if $D_*$ is a sub-chain complex, then 
$\text{Inf}_*(D_*,C_*)=D_*$. 

In \cite[Section~3.3]{yau1},  A. Grigor'yan, Y. Lin, Y. Muranov and S.T. Yau constructed the chain complex 
$D_n\cap \partial_n^{-1}(D_{n-1})$ where $\partial_n^{-1}$ denotes the pre-image of $\partial_n$. 
In the next proposition, we show that
our infimum chain complex in Definition~\ref{def9.2.2} coincides with the explicit formula given in \cite[Section~3.3]{yau1}.

\begin{proposition}\label{p9.2.88}
Given a chain complex $C_*$ and a sequence of subgroups $D_*$, the infimum chain complex is given by
\begin{eqnarray}\label{eq9.2.6}
\text{Inf}_n(D_*,C_*)=D_n\cap \partial_n^{-1}(D_{n-1}).   
\end{eqnarray}
\end{proposition}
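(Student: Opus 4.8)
The plan is to prove the identity by a double inclusion, after first pinning down what $\text{Inf}_*(D_*,C_*)$ is as a whole. By Definition~\ref{def9.2.2}, $\text{Inf}_n(D_*,C_*)$ is the sum of the degree-$n$ terms of all subchain complexes of $C_*$ that lie inside $D_*$. Since the componentwise sum of any family of subchain complexes is again a subchain complex (the boundary maps respect sums of subgroups) and still lies inside $D_*$, the graded group $\text{Inf}_*(D_*,C_*)$ is itself a subchain complex of $C_*$ contained in $D_*$, and in fact the largest such. So it suffices to prove that the graded group $I_*$ with $I_n = D_n \cap \partial_n^{-1}(D_{n-1})$ is (i) a subchain complex contained in $D_*$, and (ii) contains every subchain complex of $C_*$ contained in $D_*$. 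These two facts give $I_n \subseteq \text{Inf}_n(D_*,C_*)$ and $\text{Inf}_n(D_*,C_*) \subseteq I_n$ respectively.

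For step (i), the crux of the whole statement is that the relation $\partial \circ \partial = 0$ is exactly what forces $I_*$ to be closed under the boundary. I would take $x \in I_n = D_n \cap \partial_n^{-1}(D_{n-1})$, so that $x \in D_n$ and $\partial_n x \in D_{n-1}$, and then verify $\partial_n x \in I_{n-1} = D_{n-1} \cap \partial_{n-1}^{-1}(D_{n-2})$. The membership $\partial_n x \in D_{n-1}$ holds by assumption, while the remaining condition $\partial_{n-1}(\partial_n x) \in D_{n-2}$ is automatic, because $\partial_{n-1}\partial_n x = 0$ lies in the subgroup $D_{n-2}$. Hence $\partial_n(I_n) \subseteq I_{n-1}$, and since $I_n \subseteq D_n$ by construction, $I_*$ is a subchain complex contained in $D_*$; being one of the summands in Definition~\ref{def9.2.2}, it satisfies $I_n \subseteq \text{Inf}_n(D_*,C_*)$.

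For step (ii), I would take an arbitrary subchain complex $C_*'$ of $C_*$ with $C_m' \subseteq D_m$ in every degree $m$, and an element $x \in C_n'$. Then $x \in D_n$, and because $C_*'$ is a subchain complex we get $\partial_n x \in C_{n-1}' \subseteq D_{n-1}$, i.e. $x \in \partial_n^{-1}(D_{n-1})$. Therefore $x \in I_n$, so $C_n' \subseteq I_n$ for every admissible $C_*'$; summing over all of them yields $\text{Inf}_n(D_*,C_*) \subseteq I_n$. Combining with step (i) gives the desired equality~(\ref{eq9.2.6}).

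I do not expect a serious obstacle, since the argument is a clean lattice-theoretic double inclusion. The one place demanding care is the reading of the defining condition in Definition~\ref{def9.2.2}: the inclusion $\text{Inf}_* \subseteq I_*$ genuinely uses that the neighboring term $C_{n-1}'$ of each summand also lies in $D_{n-1}$, so the summands must be understood as full subchain complexes sitting inside $D_*$ in all degrees, not merely subgroups matching $D_*$ in a single degree. The other point worth stating explicitly is the closure of $I_*$ under $\partial$, which, as noted above, is precisely where $\partial \circ \partial = 0$ does all the work.
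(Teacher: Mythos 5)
Your proof is correct and follows essentially the same route as the paper: both arguments characterise $\text{Inf}_*(D_*,C_*)$ as the largest subchain complex of $C_*$ contained in $D_*$, verify that $I_n = D_n\cap\partial_n^{-1}(D_{n-1})$ is a subchain complex using $\partial_{n-1}\partial_n=0$, and show every subchain complex inside $D_*$ lies in $I_*$. Your extra remark that the sum of subchain complexes is again a subchain complex (so the sum in Definition~\ref{def9.2.2} really is the largest one) is left implicit in the paper but is a worthwhile clarification.
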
 
\begin{proof}
In order to prove (\ref{eq9.2.6}), we need to show that $\{D_n\cap \partial_n^{-1}(D_{n-1})\}_{n\geq 0}$ is the largest subchain complex of $C_*$ that are contained in $D_*$ as a  graded abelian group.  Let $\alpha\in D_n\cap \partial_n^{-1}(D_{n-1})$.  Then $\alpha\in D_n$ and $\partial_n\alpha\in D_{n-1}$. Since $\partial_{n-1}\partial_n\alpha=0$, 
\begin{eqnarray*}
\partial_n\alpha\in D_{n-1}\cap \partial^{-1}_{n-1}(D_{n-2}). 
\end{eqnarray*}
Thus $\{D_n\cap \partial_n^{-1}(D_{n-1})\}_{n\geq 0}$ is a subchain complex of $C_*$.  Let $C'_*\subseteq D_*$ be a subchain complex of $C_*$.  Let $\alpha' \in C'_n$.  Then $\alpha'\in D_n$ and $\partial_n\alpha'\in D_{n-1}$. Hence 
\begin{eqnarray*}
\alpha' \in D_n\cap \partial_n^{-1}(D_{n-1}). 
\end{eqnarray*}
Thus for each $n\geq 0$, 
\begin{eqnarray*}
C'_n\subseteq D_n\cap \partial_n^{-1}(D_{n-1}). 
\end{eqnarray*}
Therefore, $\{D_n\cap \partial_n^{-1}(D_{n-1})\}_{n\geq 0}$ is the largest subchain complex of $C_*$ that are contained in $D_*$ as a graded abelian group. 
\end{proof}
\begin{remark}
In (\ref{eq9.2.6}), when $n=0$, we let $\partial_0=0$ and $D_{-1}=0$.  
\end{remark}

\begin{definition}\label{def-nov26}
Given a graded group $D_*$ embedded in a chain complex $C_*$, the {\bf supremum chain complex} $\text{Sup}_*(D_*,C_*)$ of the sequence $\{D_*,C_*\}$ is the chain complex
\begin{eqnarray*}
\text{Sup}_n(D_*,C_*)=\bigcap \{C_n'\mid C_*' \text{ is a subchain complex of }  C_* 
 \text{ and  }  D_n\subseteq C_n'\}.
\end{eqnarray*}
\end{definition}
It follows immediately from Definition~\ref{def-nov26} that if $D_*$ is a sub-chain complex, then 
 $\text{Sup}_n(D_*,C_*)=D_*$.


\begin{proposition}\label{p9.2.888}
Given a chain complex $C_*$ and a sequence of subgroups $D_*$,  the supremum chain complex is given by
\begin{eqnarray}\label{eq9.2.6-s}
\text{Sup}_n(D_*,C_*)=D_n + \partial_{n+1}D_{n+1}.
\end{eqnarray}
\end{proposition}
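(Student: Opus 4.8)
The plan is to mirror the proof of Proposition~\ref{p9.2.88}: I will show that the graded group $E_*$ defined by $E_n = D_n + \partial_{n+1}D_{n+1}$ is the \emph{smallest} subchain complex of $C_*$ containing $D_*$ as a graded subgroup, and then argue that this smallest subchain complex is exactly the intersection $\text{Sup}_*(D_*,C_*)$ of Definition~\ref{def-nov26}.

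First I would check that $E_*$ is a subchain complex. Applying the boundary map and using $\partial_n\partial_{n+1}=0$ gives
\begin{eqnarray*}
\partial_n E_n = \partial_n\big(D_n+\partial_{n+1}D_{n+1}\big)=\partial_n D_n\subseteq D_{n-1}+\partial_n D_n = E_{n-1},
\end{eqnarray*}
so $\partial_n E_n\subseteq E_{n-1}$ for every $n$. Next, $D_n\subseteq D_n+\partial_{n+1}D_{n+1}=E_n$, so $E_*$ does contain $D_*$ as a graded subgroup.

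For minimality, I would take an arbitrary subchain complex $C'_*$ of $C_*$ with $D_n\subseteq C'_n$ for all $n$, and show $E_n\subseteq C'_n$. The summand $D_n$ lies in $C'_n$ by hypothesis, while the summand $\partial_{n+1}D_{n+1}$ lies in $\partial_{n+1}C'_{n+1}\subseteq C'_n$, since $D_{n+1}\subseteq C'_{n+1}$ and $C'_*$ is a subchain complex. Hence $E_n\subseteq C'_n$, which proves that $E_*$ is contained in every subchain complex that contains $D_*$.

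Finally I would tie this to the intersection in Definition~\ref{def-nov26}. Since intersections of chain complexes are again chain complexes, $\text{Sup}_*(D_*,C_*)$ is itself a subchain complex containing $D_*$, so by minimality $E_n\subseteq \text{Sup}_n(D_*,C_*)$; conversely $E_*$ is one of the subchain complexes over which that intersection is taken, so $\text{Sup}_n(D_*,C_*)\subseteq E_n$. Combining the two inclusions yields (\ref{eq9.2.6-s}). I do not anticipate a genuine obstacle here, as the argument is entirely dual to the infimum case; the only point requiring a little care is the passage between the ``smallest subchain complex'' description and the intersection in Definition~\ref{def-nov26}, which is supplied by the earlier observation that intersections of chain complexes remain chain complexes.
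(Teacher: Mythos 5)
Your argument is correct and follows essentially the same route as the paper: show that $D_n+\partial_{n+1}D_{n+1}$ is a subchain complex containing $D_*$, then show it sits inside every subchain complex containing $D_*$. The only difference is that you make explicit the final identification of the smallest such subchain complex with the intersection in Definition~\ref{def-nov26}, a step the paper leaves implicit.
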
 
\begin{proof}
In order to prove (\ref{eq9.2.6-s}), we need to show that $\{D_n + \partial_{n+1}D_{n+1}\}_{n\geq 0}$ is the smallest subchain complex of $C_*$ that contains $D_*$ as a graded abelian group. Let $\bar\alpha\in D_n+ \partial_{n+1} D_{n+1}$.  Then 
\begin{eqnarray*}
\bar\alpha=\beta_n+\partial_{n+1}\beta_{n+1}
\end{eqnarray*}
  where $\beta_*\in D_*$.  Since $\partial_n\partial_{n+1} \beta_{n+1}=0$,  we have
$\partial_n\bar\alpha=\partial_n\beta_n$.  
Hence 
\begin{eqnarray*}
\partial_n\bar\alpha\in D_{n-1}+ \partial_n D_n. 
\end{eqnarray*}
Hence $\{D_n + \partial_{n+1}D_{n+1}\}_{n\geq 0}$ is a subchain complex of $C_*$.  Let $C'_*\supseteq D_*$ be a subchain complex of $C_*$. Then $\beta_n\in C'_n$ and $\partial_{n+1}\beta_{n+1}\in C'_{n}$.  Thus $\bar \alpha\in C'_n$. Thus for each $n\geq 0$, 
\begin{eqnarray*}
  D_n + \partial_{n+1}D_{n+1}\subseteq C'_n.  
\end{eqnarray*}
Therefore, $\{D_n + \partial_{n+1}D_{n+1}\}_{n\geq 0}$ is the smallest subchain complex of $C_*$ that contains $D_*$ as a graded abelian group. 
\end{proof}



Given two graded subgroups $D_*$ and $D_*'$, by a straight-forward computation, we have some basic properties of the infimum chain complexes and the supremum chain complexes, in next proposition. 
 \begin{proposition}\label{pr9.2.18}
Let $C_*$ be a chain complex and $D_*,D_*'$ be graded subgroups of $C_*$.  Then 
\begin{eqnarray*}
\text{Inf}_n (D_*\cap D_*',C_*)&=&\text{Inf}_n(D_*,C_*)\cap \text{Inf}_n(D_*',C_*),\\
\text{Inf}_n (D_*+ D_*',C_*)&\supseteq&\text{Inf}_n (D_*,C_*)+\text{Inf}_n ( D_*',C_*), \\
\text{Sup}_n (D_*\cap D_*',C_*)&\subseteq&\text{Sup}_n(D_*,C_*)\cap \text{Sup}_n(D_*',C_*),\\
\text{Sup}_n (D_*+ D_*',C_*)&=&\text{Sup}_n(D_*,C_*)+ \text{Sup}_n(D_*',C_*).
\end{eqnarray*}
 \end{proposition}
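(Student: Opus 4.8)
The plan is to reduce everything to the explicit formulas established in Proposition~\ref{p9.2.88} and Proposition~\ref{p9.2.888}, namely $\text{Inf}_n(D_*,C_*)=D_n\cap\partial_n^{-1}(D_{n-1})$ and $\text{Sup}_n(D_*,C_*)=D_n+\partial_{n+1}D_{n+1}$, and then to exploit two elementary facts about the boundary map: the preimage $\partial_n^{-1}(\cdot)$ commutes with intersections, i.e. $\partial_n^{-1}(A\cap B)=\partial_n^{-1}(A)\cap\partial_n^{-1}(B)$, whereas the image $\partial_{n+1}(\cdot)$ commutes with sums, i.e. $\partial_{n+1}(A+B)=\partial_{n+1}A+\partial_{n+1}B$. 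This asymmetry, namely that the preimage respects $\cap$ but not $+$ while the image respects $+$ but not $\cap$, is exactly what forces two of the four statements to be genuine equalities and the other two to be one-sided inclusions.

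For the first identity I would substitute the formula for $\text{Inf}_n$ into both sides. The left-hand side becomes $(D_n\cap D_n')\cap\partial_n^{-1}(D_{n-1}\cap D_{n-1}')$, and using that the preimage commutes with intersection this rewrites as $D_n\cap D_n'\cap\partial_n^{-1}(D_{n-1})\cap\partial_n^{-1}(D_{n-1}')$, which after reordering the four intersected subgroups is precisely the right-hand side. Dually, for the fourth identity I would substitute the formula for $\text{Sup}_n$: the left-hand side is $(D_n+D_n')+\partial_{n+1}(D_{n+1}+D_{n+1}')$, and since the image of a sum of subgroups is the sum of the images, this equals $D_n+D_n'+\partial_{n+1}D_{n+1}+\partial_{n+1}D_{n+1}'$, which is the right-hand side after reassociating.

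The two inclusions I would prove by a short element chase. For the second statement, given $\alpha\in\text{Inf}_n(D_*,C_*)$ and $\alpha'\in\text{Inf}_n(D_*',C_*)$, I note that $\alpha+\alpha'\in D_n+D_n'$ while $\partial_n(\alpha+\alpha')=\partial_n\alpha+\partial_n\alpha'\in D_{n-1}+D_{n-1}'$, so $\alpha+\alpha'$ lies in $(D_n+D_n')\cap\partial_n^{-1}(D_{n-1}+D_{n-1}')=\text{Inf}_n(D_*+D_*',C_*)$. For the third statement, given $\beta+\partial_{n+1}\gamma$ with $\beta\in D_n\cap D_n'$ and $\gamma\in D_{n+1}\cap D_{n+1}'$, both summands visibly lie in each of $D_n+\partial_{n+1}D_{n+1}$ and $D_n'+\partial_{n+1}D_{n+1}'$, so the element lies in their intersection.

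The only subtlety, which is a place to be careful rather than a genuinely hard step, is to resist asserting equality in the middle two lines. Equality in the second would require $\partial_n^{-1}(D_{n-1}+D_{n-1}')=\partial_n^{-1}(D_{n-1})+\partial_n^{-1}(D_{n-1}')$, which can fail because an element whose boundary is a sum $d+d'$ need not split as a sum of two elements whose boundaries are $d$ and $d'$ separately; and equality in the third fails dually, since $\text{Sup}_n$ of an intersection can be strictly smaller than the intersection of the two supremum chain complexes. Thus the computation is routine once the two commutation facts are isolated, and the content lies entirely in recognising which set operation the relevant map preserves.
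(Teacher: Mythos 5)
Your proof is correct and is precisely the ``straight-forward computation'' that the paper asserts without writing out: the paper states the proposition immediately after Propositions~\ref{p9.2.88} and \ref{p9.2.888} and offers no proof, and reducing all four statements to the explicit formulas $\text{Inf}_n(D_*,C_*)=D_n\cap\partial_n^{-1}(D_{n-1})$ and $\text{Sup}_n(D_*,C_*)=D_n+\partial_{n+1}D_{n+1}$ is clearly the intended route. Your identification of the asymmetry (preimages commute with $\cap$, images with $+$) correctly explains why two lines are equalities and two are only inclusions, and your element chases for the two inclusions are sound.
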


\smallskip

Thirdly, we study the homology of the infimum chain complexes and the  supremum chain complexes.  It follows from Proposition~\ref{p9.2.88} and \cite[Proposition~3.13]{yau1} that
 \begin{eqnarray*}
&& H_n(\text{Inf}_*(D_*,C_*))\\
 &=&\text{Ker}(\partial_n|_{D_n\cap \partial_n^{-1}(D_{n-1})})/\text{Im}(\partial_{n+1}|_{D_{n+1}\cap \partial_{n+1}^{-1}(D_{n})})\\
 &=&\text{Ker}(\partial_n|_{D_n})/(D_n\cap \partial_{n+1}D_{n+1}). 
 \end{eqnarray*}
 Moreover,  we have the next proposition.
\begin{proposition}\label{prop9.2.8}
The homology of $\text{Inf}_*(D_*,C_*)$ and the  homology of $\text{Sup}_*(D_*,C_*)$ are isomorphic. 
\end{proposition}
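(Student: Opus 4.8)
The plan is to establish the isomorphism by computing both homology groups explicitly and showing that each reduces, as a concrete quotient of subgroups of $C_n$, to the single expression $\text{Ker}(\partial_n|_{D_n})/(D_n\cap \partial_{n+1}D_{n+1})$. For the infimum side no work is required: the displayed computation immediately preceding this proposition (obtained from Proposition~\ref{p9.2.88} together with \cite[Proposition~3.13]{yau1}) already gives $H_n(\text{Inf}_*(D_*,C_*))=\text{Ker}(\partial_n|_{D_n})/(D_n\cap \partial_{n+1}D_{n+1})$. So the entire task is to compute the homology of the supremum chain complex and recognize the same answer.

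For the supremum side I would invoke Proposition~\ref{p9.2.888} to write $\text{Sup}_n(D_*,C_*)=D_n+\partial_{n+1}D_{n+1}$ and compute the image and kernel separately. First, applying $\partial_{n+1}$ to $\text{Sup}_{n+1}(D_*,C_*)=D_{n+1}+\partial_{n+2}D_{n+2}$ and using $\partial_{n+1}\partial_{n+2}=0$ collapses the image to $\text{Im}(\partial_{n+1}|_{\text{Sup}_{n+1}})=\partial_{n+1}D_{n+1}$. Next, writing a general element of $\text{Sup}_n$ as $d+\partial_{n+1}e$ with $d\in D_n$ and $e\in D_{n+1}$, the relation $\partial_n\partial_{n+1}=0$ shows its boundary equals $\partial_n d$, so the cycle condition is exactly $d\in\text{Ker}(\partial_n|_{D_n})$; hence $\text{Ker}(\partial_n|_{\text{Sup}_n})=\text{Ker}(\partial_n|_{D_n})+\partial_{n+1}D_{n+1}$. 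This yields $H_n(\text{Sup}_*(D_*,C_*))=(\text{Ker}(\partial_n|_{D_n})+\partial_{n+1}D_{n+1})/\partial_{n+1}D_{n+1}$.

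To finish I would apply the second isomorphism theorem, with $A=\text{Ker}(\partial_n|_{D_n})$ and $B=\partial_{n+1}D_{n+1}$, giving $(A+B)/B\cong A/(A\cap B)$, and then simplify the intersection: since every boundary is a cycle, $\partial_{n+1}D_{n+1}\subseteq\text{Ker}\,\partial_n$, so $A\cap B=\text{Ker}(\partial_n|_{D_n})\cap\partial_{n+1}D_{n+1}=D_n\cap\partial_{n+1}D_{n+1}$. Thus $H_n(\text{Sup}_*(D_*,C_*))\cong\text{Ker}(\partial_n|_{D_n})/(D_n\cap\partial_{n+1}D_{n+1})$, which coincides with the infimum homology computed above, completing the proof for every $n$. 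The argument is essentially a routine diagram chase, so I do not anticipate a genuine obstacle; the only point requiring care is the simplification $\text{Ker}(\partial_n|_{D_n})\cap\partial_{n+1}D_{n+1}=D_n\cap\partial_{n+1}D_{n+1}$, where one must remember that the $\text{Ker}\,\partial_n$ factor is redundant precisely because boundaries automatically lie in the kernel, and (if naturality is desired) that the second-isomorphism-theorem map is induced by the inclusion $\text{Ker}(\partial_n|_{D_n})\hookrightarrow\text{Ker}(\partial_n|_{\text{Sup}_n})$, hence canonical.
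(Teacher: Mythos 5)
Your proposal is correct and follows essentially the same route as the paper: both compute $\mathrm{Ker}(\partial_n|_{\mathrm{Sup}_n})=\mathrm{Ker}(\partial_n|_{D_n})+\partial_{n+1}D_{n+1}$ and $\mathrm{Im}(\partial_{n+1}|_{\mathrm{Sup}_{n+1}})=\partial_{n+1}D_{n+1}$, apply the second isomorphism theorem, and identify the resulting intersection with $D_n\cap\partial_{n+1}D_{n+1}$ to match the infimum homology. Your write-up is in fact slightly more explicit than the paper's chain of equalities, but there is no substantive difference.
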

\begin{proof}
With the help of the isomorphism theorem of groups,  we have 
\begin{eqnarray*}
&& H_n(\text{Sup}_*(D_*,C_*))\\
 &=&\text{Ker}(\partial_n|_{D_n+\partial_{n+1}D_{n+1}})/\text{Im}(\partial_{n+1}|_{D_{n+1}+\partial_{n+2} D_{n+2}})\\
 &=&(\partial_{n+1}D_{n+1}+\text{Ker}(\partial_n|_{D_n}))/\partial_{n+1} D_{n+1}\\
 &\cong&\text{Ker}(\partial_n|_{D_n})/\text{Ker}(\partial_n|_{D_n})\cap \partial_{n+1} D_{n+1}\\
 &=&\text{Ker}(\partial_n|_{D_n})/(D_n\cap \partial_{n+1}D_{n+1})\\
 &=& H_n(\text{Inf}_*(D_*,C_*)).     
\end{eqnarray*}
The assertion follows. 
\end{proof}

\section{The Embedded  Homology  of Hypergraphs}\label{sec9.2}

In this section, we  study    the  embedded   homology   of hypergraphs. In Subsection~\ref{sec9.3.1}, we study the associated simplicial complex of hypergraphs. In Subsection~\ref{sec9.3.2}, we define the embedded  homology  of hypergraphs and prove some basic properties of the embedded homology. In Subsection~\ref{sec9.3.3}, we give a Mayer-Vietoris sequence for the embedded homology of hypergraphs.  


\subsection{The Associated Simplicial Complex of Hypergraphs}\label{sec9.3.1}

Given a hypergraph $\mathcal{H}$,  A.D. Parks and S.L. Lipscomb  \cite{parks} defined its {\bf associated simplicial complex} $K_\mathcal{H}$   to be the smallest simplicial complex  such that the hyperedges of  $\mathcal{H}$ is a subset of the simplices of $K_\mathcal{H}$.   Precisely,  the set of all simplices of $K_\mathcal{H}$  consists of  all the non-empty subsets $\tau\subseteq \sigma$,  for all $\sigma\in \mathcal{H}$ (cf. \cite[Lemma~8]{parks}).   All the hyperedges in $K_\mathcal{H}\setminus \mathcal{H}$ forms a hypergraph, which will be called the {\bf complement hypergraph} of $\mathcal{H}$ and denoted as $\mathcal{H}^c$,  in this paper.  

\smallskip

Firstly, we give a functor from the category of hypergraphs to the category of simplicial complexes, sending $\mathcal{H}$ to $K_\mathcal{H}$. A simplicial map from a simplicial complex  $K$ to a simplicial complex  $K'$ is a map $f$ sending a vertex of  $K$ to  a    vertex of $K'$ such that for any simplex $\sigma=\{v_0,\cdots,v_n\}$ of $K$, $f(\sigma)=\{f(v_0),\cdots,f(v_n)\}$ is a simplex of $K'$.  A morphism of hypergraphs from a hypergraph  $\mathcal{H} $ to a hypergraph $\mathcal{H}' $ is a map 
 $f$ sending a vertex of $\mathcal{H}$ to a vertex of $\mathcal{H}'$   such that whenever $\sigma=\{v_0,\cdots,v_n\}$ is a hyperedge of  $\mathcal{H}$, $f(\sigma)=\{f(v_0),\cdots,f(v_n)\}$ is a hyperedge of  ${\mathcal{H}'}$.    Given a morphism of hypergraphs $f: \mathcal{H} \longrightarrow \mathcal{H}' $,  we have a simplicial map $\tilde f: K_\mathcal{H}\longrightarrow K_{\mathcal{H}' }$  sending a simplex $\{v_0, v_1, \cdots, v_n\}$ in $K_\mathcal{H}$ to the simplex $\{f(v_0), f(v_1), \cdots,  f(v_n)\}$ in $K_{\mathcal{H}'} $.  Consequently, 
we have a functor $\mathcal{F}$ from the category of hypergraphs to the category of simplicial complexes,  sending a hypergraph $\mathcal{H} $ to its  associated  simplicial complex $K_\mathcal{H}$ and sending a
morphism $f$ of hypergraphs to its induced simplicial map $\tilde f$ of the corresponding  associated  simplicial complexes.  The functor $\mathcal{F}$ is the adjoint functor of the forgetful functor from the category of simplicial complexes to the category of hypergraphs.

\smallskip

Secondly, we prove some basic properties of the associated simplicial complexes, in the remaining part of this section.  
The next proposition gives some basic properties of the associated simplicial complexes of hypergraphs. 

\begin{proposition}\label{pr9.3.1.1}
Let $\mathcal{H}$ and $\mathcal{H}'$ be hypergraphs. Then
\begin{eqnarray}
K_{\mathcal{H}\cup\mathcal{H}'}=K_{\mathcal{H}}\cup K_{\mathcal{H}'},
\label{eq9.3.1.28}\\
\label{eq9.3.1.18}
K_{\mathcal{H}\cap\mathcal{H}'}\subseteq K_{\mathcal{H}}\cap K_{\mathcal{H}'}.
\end{eqnarray}
Moreover, the equality in (\ref{eq9.3.1.18}) holds if for any $\sigma\in\mathcal{H}$ and any $\sigma'\in\mathcal{H}'$, either $\sigma\cap\sigma'$ is empty or  $\sigma\cap\sigma'\in\mathcal{H}\cap\mathcal{H}'$. 
\end{proposition}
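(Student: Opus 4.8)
The plan is to argue directly from the explicit description of the associated simplicial complex recalled just above: a non-empty set $\tau$ is a simplex of $K_\mathcal{H}$ precisely when $\tau\subseteq\sigma$ for some hyperedge $\sigma\in\mathcal{H}$. With this characterisation in hand, all three assertions reduce to elementary set-chasing, and no homological input is needed.

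For the identity (\ref{eq9.3.1.28}), I would take a simplex $\tau\in K_{\mathcal{H}\cup\mathcal{H}'}$, so $\tau$ is non-empty and $\tau\subseteq\sigma$ for some $\sigma\in\mathcal{H}\cup\mathcal{H}'$; since $\sigma$ lies in $\mathcal{H}$ or in $\mathcal{H}'$, the set $\tau$ is a simplex of $K_\mathcal{H}$ or of $K_{\mathcal{H}'}$, which gives the inclusion $\subseteq$. The reverse inclusion is the same step run backwards: a simplex of $K_\mathcal{H}$ (resp.\ $K_{\mathcal{H}'}$) is dominated by some $\sigma\in\mathcal{H}\subseteq\mathcal{H}\cup\mathcal{H}'$ (resp.\ $\sigma\in\mathcal{H}'\subseteq\mathcal{H}\cup\mathcal{H}'$), hence it belongs to $K_{\mathcal{H}\cup\mathcal{H}'}$. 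For the inclusion (\ref{eq9.3.1.18}), a simplex $\tau\in K_{\mathcal{H}\cap\mathcal{H}'}$ satisfies $\tau\subseteq\sigma$ for some $\sigma\in\mathcal{H}\cap\mathcal{H}'$; since this single $\sigma$ lies simultaneously in $\mathcal{H}$ and in $\mathcal{H}'$, the set $\tau$ is a simplex of both $K_\mathcal{H}$ and $K_{\mathcal{H}'}$, and so $\tau\in K_\mathcal{H}\cap K_{\mathcal{H}'}$.

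The one step that genuinely uses the hypothesis, and which I regard as the crux, is the reverse inclusion $K_\mathcal{H}\cap K_{\mathcal{H}'}\subseteq K_{\mathcal{H}\cap\mathcal{H}'}$ that upgrades (\ref{eq9.3.1.18}) to an equality. Here a simplex $\tau$ in the left-hand side is dominated by some $\sigma\in\mathcal{H}$ \emph{and} by some $\sigma'\in\mathcal{H}'$, but a priori $\sigma\neq\sigma'$, so membership in $\mathcal{H}\cap\mathcal{H}'$ is not immediate. The point is that $\tau\subseteq\sigma\cap\sigma'$ and $\tau$ is non-empty, whence $\sigma\cap\sigma'$ is non-empty; the hypothesis then forces $\sigma\cap\sigma'\in\mathcal{H}\cap\mathcal{H}'$, and since $\tau$ is a non-empty subset of $\sigma\cap\sigma'$ we conclude $\tau\in K_{\mathcal{H}\cap\mathcal{H}'}$. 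The obstacle is essentially one of bookkeeping: one must produce a single common dominating hyperedge, and the intersection condition is exactly what manufactures it from the two separate witnesses $\sigma$ and $\sigma'$.

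I would close by noting why the hypothesis cannot simply be dropped: without it the inclusion (\ref{eq9.3.1.18}) can be strict, for instance when $\mathcal{H}$ and $\mathcal{H}'$ each contain a hyperedge through a common vertex but share no hyperedge at all, in which case $\mathcal{H}\cap\mathcal{H}'$ is empty while $K_\mathcal{H}\cap K_{\mathcal{H}'}$ still contains the vertex in question. This confirms that the condition is the correct separating hypothesis for the equality case.
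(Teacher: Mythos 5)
Your proof is correct and follows essentially the same route as the paper: both arguments reduce everything to the characterisation that $\tau\in K_\mathcal{H}$ iff $\tau$ is a non-empty subset of some hyperedge of $\mathcal{H}$, and then chase elements. If anything, you are more explicit than the paper on the equality case --- the paper merely asserts that the hypothesis gives equality, whereas you spell out the key observation that $\tau\subseteq\sigma\cap\sigma'$ is non-empty, so the hypothesis supplies the single common dominating hyperedge $\sigma\cap\sigma'\in\mathcal{H}\cap\mathcal{H}'$.
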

\begin{proof}

To verify (\ref{eq9.3.1.28}),  we notice that $\{v_0,\cdots,v_n\}\in K(\mathcal{H}\cup \mathcal{H}')$ if and only if $v_0,\cdots,v_n \in V_\mathcal{H}  \cup V_{\mathcal{H}'} $ and there exists a hyperedge $\sigma \in \mathcal{H} \cup \mathcal{H}'$ such that $\{v_0,\cdots ,v_n\} \subseteq \sigma$. That is, either $\sigma$ is a hyperedge of $\mathcal{H}$ and $\{v_0,\cdots ,v_n\} \subseteq \sigma$, or $\sigma$ is a hyperedge of $\mathcal{H}'$ and $\{v_0,\cdots ,v_n\} \subseteq \sigma$. Hence $\{v_0,\cdots,v_n\}\in K(\mathcal{H}\cup \mathcal{H}')$ if and only if either $\{v_0,\cdots,v_n\}\in K_{\mathcal{H}}$ or $\{v_0,\cdots,v_n\}\in K_{\mathcal{H}'}$.

To verify  (\ref{eq9.3.1.18}),  we notice that $\{v_0,\cdots,v_n\}\in K(\mathcal{H}\cap \mathcal{H}')$ if and only if $v_0,\cdots,v_n \in V_\mathcal{H}  \cap V_{\mathcal{H}' }$ and there exists a hyperedge $\sigma \in \mathcal{H} \cap \mathcal{H}'$ such that $\{v_0,\cdots ,v_n\} \subseteq \sigma$.   On the other hand, $\{ v_0,\cdots ,v_n\} \in K_\mathcal{H}\cap K_{\mathcal{H}'}$ if and only if there exist a hyperedge $\sigma\in  \mathcal{H}$ and a hyperedge $\sigma'\in  \mathcal{H}'$ such that $\{v_0,\cdots ,v_n\} \subseteq  \sigma$ and $\{v_0,\cdots ,v_n\} \subseteq  \sigma'$.  Hence $\{v_0,\cdots,v_n\}\in K(\mathcal{H}\cap \mathcal{H}')$ implies  $\{ v_0,\cdots ,v_n\} \in K_\mathcal{H}\cap K_{\mathcal{H}'}$.  In particular, the equality of  (\ref{eq9.3.1.18}) holds under our additional the assumption. 
\end{proof}

The next proposition gives a universal property of the associated simplicial complexes of   hypergraphs. 

\begin{proposition}\label{p9.3.1.1}
Let $\mathcal{H}$ be a hypergraph and let $i_\mathcal{H}: \mathcal{H}\longrightarrow K_\mathcal{H} $ be the canonical embedding of $\mathcal{H}$ into the associated simplicial complex. If there is a  simplicial complex $K$  and an  injective map $i: \mathcal{H}\longrightarrow K$, then there exists an injective  simplicial map $\phi$ such that $\phi\circ i_\mathcal{H}=i$, making the following diagram commute: 
\begin{eqnarray*}
\xymatrix{
\mathcal{H}\ar[r] ^{i}\ar[d]_{i_\mathcal{H}} & K\\
K_\mathcal{H}.\ar@{..>}[ru]_\phi
}
\end{eqnarray*}
\end{proposition}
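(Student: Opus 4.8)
The plan is to build $\phi$ directly from the vertex map underlying $i$ and to extend it simplicially over $K_\mathcal{H}$. Since $K_\mathcal{H}$ is generated by the hyperedges of $\mathcal{H}$, its vertex set coincides with $V_\mathcal{H}$; and $i$, being a morphism of hypergraphs into the simplicial complex $K$, is determined by a vertex map $i\colon V_\mathcal{H}\longrightarrow V_K$. I would therefore set $\phi(v)=i(v)$ for every vertex $v$ of $K_\mathcal{H}$ and declare $\phi(\{v_0,\cdots,v_n\})=\{i(v_0),\cdots,i(v_n)\}$ on every simplex. This assignment is forced and involves no choices, so its well-definedness is immediate.

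The key step is to verify that $\phi$ is a simplicial map, i.e. that it carries each simplex of $K_\mathcal{H}$ to a simplex of $K$. Here I would invoke the Parks--Lipscomb description of $K_\mathcal{H}$ (\cite[Lemma~8]{parks}): any simplex $\tau=\{v_0,\cdots,v_n\}$ of $K_\mathcal{H}$ satisfies $\tau\subseteq\sigma$ for some hyperedge $\sigma\in\mathcal{H}$. As $i$ is a morphism of hypergraphs, $i(\sigma)$ is a hyperedge of $K$, hence a simplex of $K$ because $K$ is a simplicial complex. Since $\phi(\tau)=i(\tau)\subseteq i(\sigma)$ is non-empty, the downward-closure condition (ii) in the definition of a simplicial complex forces $\phi(\tau)$ to be a simplex of $K$ as well; thus $\phi$ is simplicial.

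Commutativity is then routine: for $\sigma\in\mathcal{H}$ one has $i_\mathcal{H}(\sigma)=\sigma$ as a simplex of $K_\mathcal{H}$, whence $\phi(i_\mathcal{H}(\sigma))=\{i(v)\mid v\in\sigma\}=i(\sigma)$, giving $\phi\circ i_\mathcal{H}=i$. For injectivity I would use that $i$ is injective on vertices, so that $\phi$ is injective on the vertices of $K_\mathcal{H}$; since a simplex is determined by its vertex set and $\phi$ acts vertexwise, distinct simplices of $K_\mathcal{H}$ have distinct images and $\phi$ is injective. The one point demanding care is the reading of the hypothesis ``injective map $i$'': it is injectivity of the underlying \emph{vertex} map that propagates to injectivity of $\phi$ on simplices, and I would make this explicit at the start, since mere injectivity on hyperedges would not by itself suffice.
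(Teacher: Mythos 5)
Your proposal is correct and follows essentially the same route as the paper's own proof: define $\phi$ vertexwise by $i$, use the Parks--Lipscomb description that every simplex of $K_\mathcal{H}$ is a subset of a hyperedge of $\mathcal{H}$ together with the downward-closure of $K$ to see that $\phi$ is simplicial, and then read off commutativity and injectivity from injectivity of $i$ on vertices. Your explicit remark that it is injectivity of the underlying vertex map which is being used is a point the paper leaves implicit, but it is not a difference in method.
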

\begin{proof}
  Let $\{v_0,\cdots,v_n\}$ be a hyperedge of $\mathcal{H}$. Then, since $i$ is injective, $\{i(v_0),\cdots,i(v_n)\}$ is an $n$-simplex of $K$. For any subset $\{v_{j_0},\cdots,v_{j_m}\}$ of $\{v_0,\cdots,v_n\}$,  since $K$ is a simplicial complex, we have that $\{i(v_{j_0}),\cdots,i(v_{j_m})\}$ is a simplex of $K$. On the other hand, by the definition,  any simplex of $K_\mathcal{H}$ is  of the form $\{v_{j_0},\cdots,v_{j_m}\}$ (here we regard $i_\mathcal{H}$ as the canonical inclusion and do not distinguish a hyperedge of $\mathcal{H}$ with its image in $K_\mathcal{H}$). Therefore, we obtain a simplicial map $\phi: K_\mathcal{H}\longrightarrow K$ sending the  simplex $\{v_{j_0},\cdots,v_{j_m}\}$ to the simplex  $\{i(v_{j_0}),\cdots,i(v_{j_m})\}$.  It follows that $\phi$ is injective and $\phi\circ i_\mathcal{H}=i$.
\end{proof}

The next proposition  is a consequence of  Proposition~\ref{p9.3.1.1}. 

\begin{Proposition}\label{c9.3.1.2}
 Let $\mathcal{H}$ be a hypergraph and let $K$ be a simplicial complex such that there is an  injective map $i: \mathcal{H}\longrightarrow K$.   Then  
\begin{eqnarray}\label{eq9.1.91}
\text{Inf}_*(G(\mathcal{H}_*),G(K_*))=\text{Inf}_*(G(\mathcal{H}_*),G((K_{\mathcal{H}})_*))
\end{eqnarray} 
and 
\begin{eqnarray}\label{eq9.1.92}
\text{Sup}_*(G(\mathcal{H}_*),G(K_*))=\text{Sup}_*(G(\mathcal{H}_*),G((K_{\mathcal{H}})_*)).
\end{eqnarray} 
\end{Proposition}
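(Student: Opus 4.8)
The plan is to reduce both identities to the explicit formulas for the infimum and supremum chain complexes established in Propositions~\ref{p9.2.88} and~\ref{p9.2.888}, and then to observe that the only datum these formulas depend on---namely the restriction of the simplicial boundary operator to $G(\mathcal{H}_*)$---is insensitive to whether $\mathcal{H}$ is regarded as embedded in $K$ or in $K_\mathcal{H}$.

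First I would invoke Proposition~\ref{p9.3.1.1} to obtain an injective simplicial map $\phi\colon K_\mathcal{H}\longrightarrow K$ with $\phi\circ i_\mathcal{H}=i$. Since $\phi$ is injective and simplicial, it identifies $K_\mathcal{H}$ with the subcomplex $\phi(K_\mathcal{H})$ of $K$; passing to chains makes $G((K_\mathcal{H})_*)$ a subchain complex of $G(K_*)$, and under this identification the image of $G(\mathcal{H}_*)$ in $G((K_\mathcal{H})_*)$ coincides with its image in $G(K_*)$. Thus all four graded groups appearing in (\ref{eq9.1.91}) and (\ref{eq9.1.92}) may be regarded as subgroups of $G(K_*)$, so that the claimed equalities make literal sense and are not merely assertions up to isomorphism.

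Next I would apply the formulas $\text{Inf}_n(D_*,C_*)=D_n\cap\partial_n^{-1}(D_{n-1})$ and $\text{Sup}_n(D_*,C_*)=D_n+\partial_{n+1}D_{n+1}$ with $D_*=G(\mathcal{H}_*)$. The decisive point is that for any chain $\alpha\in G(\mathcal{H}_n)$ the boundary $\partial_n\alpha$ computed in $G(K_*)$ and the boundary computed in $G((K_\mathcal{H})_*)$ agree: a hyperedge $\sigma=\{v_0,\cdots,v_n\}$ has the same $(n-1)$-faces $\{v_0,\cdots,\hat v_i,\cdots,v_n\}$ in both complexes, all of which lie in $K_\mathcal{H}$ (hence in $K$) because each simplicial complex contains every face of each of its simplices, and the signed sum $\sum_i(-1)^i d_i\sigma$ defining $\partial_n$ is therefore identical in the two settings. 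Consequently the condition $\partial_n\alpha\in G(\mathcal{H}_{n-1})$ that cuts out the infimum, and the image $\partial_{n+1}G(\mathcal{H}_{n+1})$ that builds up the supremum, are the same in both ambient chain complexes. Substituting into the two formulas yields (\ref{eq9.1.91}) and (\ref{eq9.1.92}) directly.

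I do not anticipate a serious obstacle: the whole content lies in correctly matching up the two copies of $G(\mathcal{H}_*)$, which the universal property of Proposition~\ref{p9.3.1.1} supplies, together with the elementary observation that the simplicial boundary is determined by the faces of the hyperedges and is hence independent of the choice of ambient simplicial complex. The one point that deserves to be recorded explicitly is that the injectivity of $\phi$ guarantees that no two distinct hyperedges of $\mathcal{H}$ are collapsed, so that the identification of the relevant chain groups is genuinely an isomorphism of subgroups of $G(K_*)$ and the equalities hold exactly.
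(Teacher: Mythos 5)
Your proposal is correct and follows essentially the route the paper intends: the paper offers no written proof beyond the remark that the proposition ``is a consequence of Proposition~\ref{p9.3.1.1},'' and your argument supplies exactly the missing details --- using the injective simplicial map $\phi$ from the universal property to identify $K_\mathcal{H}$ with a subcomplex of $K$, and then observing via the explicit formulas of Propositions~\ref{p9.2.88} and~\ref{p9.2.888} that both constructions depend only on $\partial_*$ restricted to $G(\mathcal{H}_*)$, which is unchanged because every face of a hyperedge already lies in $K_\mathcal{H}$. No gaps; the point you flag about injectivity ensuring the two copies of $G(\mathcal{H}_*)$ are genuinely identified is the right one to make explicit.
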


For simplicity, we denote the simplicial complex (\ref{eq9.1.91}) as $\text{Inf}_*(\mathcal{H})$ and denote the simplicial complex (\ref{eq9.1.92}) as $\text{Sup}_*(\mathcal{H})$. By Proposition~\ref{c9.3.1.2}, both $\text{Inf}_*(\mathcal{H})$ and $\text{Sup}_*(\mathcal{H})$ do not depend on the choice of the ambient simplicial complex $K$  that $\mathcal{H}$ is  embedded in.

\subsection{The Embedded  Homology  of Hypergraphs}\label{sec9.3.2}

Given a hypergraph $\mathcal{H}$ and an abelian group $G$, we define the $n$-th {\bf embedded homology} of $\mathcal{H}$ (with coefficients in $G$) as 
\begin{eqnarray*}
H_n(\mathcal{H})=H_n(\text{Inf}_*(\mathcal{H})).
\end{eqnarray*}
In particular, if $\mathcal{H}$ is a simplicial complex, then the embedded homology $H_*(\mathcal{H})$ coincides with the usual simplicial homology of $\mathcal{H}$.  In this subsection, we give some basic properties of the embedded homology of hypergraphs. Then we give an example illustrating the computation. 

\smallskip

The next proposition follows from Proposition~\ref{prop9.2.8} and Proposition~\ref{c9.3.1.2}. 

\begin{proposition}\label{p9.3.2.1}
Let $G$ be an abelian group. Let $\mathcal{H}$ be a hypergraph,   $K$ be a simplicial complex that $\mathcal{H}$ can be embedded in and   $\partial_*$ be the boundary map of $K$. Then  
\begin{eqnarray*}
H_n(\mathcal{H})
&\cong&H_n(\text{Sup}_*(\mathcal{H}))\nonumber\\
&\cong&\text{Ker}(\partial_n|_{G(\mathcal{H}_n)})/((G(\mathcal{H}_n))\cap \partial_{n+1}(G(\mathcal{H}_{n+1}))). 
\end{eqnarray*}
\end{proposition}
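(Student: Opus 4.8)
The plan is to assemble the statement directly from three ingredients already established: the explicit formula for the homology of an infimum chain complex, the independence of the ambient simplicial complex, and the isomorphism between the infimum and supremum homologies. First I would fix any simplicial complex $K$ into which $\mathcal{H}$ embeds (one always exists, e.g. $K_\mathcal{H}$), and observe that $G(\mathcal{H}_*)$ is a graded subgroup of the chain complex $G(K_*)$ equipped with the boundary maps $\partial_*$. This places us squarely in the setting of Section~\ref{sec88}, with $D_n = G(\mathcal{H}_n)$ and $C_n = G(K_n)$. By the definition of the embedded homology, $H_n(\mathcal{H}) = H_n(\text{Inf}_*(\mathcal{H}))$, and by Proposition~\ref{c9.3.1.2} the chain complex $\text{Inf}_*(\mathcal{H})$ does not depend on the chosen $K$, so we may freely compute using this particular ambient $K$.

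Next I would invoke the explicit computation recorded just before Proposition~\ref{prop9.2.8}, which gives
\[
H_n(\text{Inf}_*(D_*,C_*)) = \text{Ker}(\partial_n|_{D_n})/(D_n\cap \partial_{n+1}D_{n+1}).
\]
Specialising to $D_n = G(\mathcal{H}_n)$ yields the rightmost expression in the statement, namely $\text{Ker}(\partial_n|_{G(\mathcal{H}_n)})/\big((G(\mathcal{H}_n))\cap \partial_{n+1}(G(\mathcal{H}_{n+1}))\big)$. For the middle isomorphism I would simply apply Proposition~\ref{prop9.2.8}, which asserts $H_n(\text{Inf}_*(D_*,C_*)) \cong H_n(\text{Sup}_*(D_*,C_*))$, to identify $H_n(\mathcal{H}) = H_n(\text{Inf}_*(\mathcal{H}))$ with $H_n(\text{Sup}_*(\mathcal{H}))$. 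Chaining these two identifications in order produces the full chain of isomorphisms claimed.

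Since every step here is either a definition or a previously proven proposition, there is no genuine obstacle: the result is essentially a bookkeeping corollary of the work in Section~\ref{sec88} together with Proposition~\ref{c9.3.1.2}. The only point deserving a line of care is checking that the hypotheses of the cited results are met, that is, that $G(\mathcal{H}_*)$ really is a graded subgroup of the chain complex $G(K_*)$ so that $\text{Inf}_*$ and $\text{Sup}_*$ are defined, and that Proposition~\ref{c9.3.1.2} genuinely lets us replace the abstract $\text{Inf}_*(\mathcal{H})$ and $\text{Sup}_*(\mathcal{H})$ by the concrete complexes built from the given $K$. Once this is noted, no further computation is required.
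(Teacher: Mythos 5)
Your argument is correct and matches the paper's own treatment: the paper gives no separate proof, stating only that the proposition ``follows from Proposition~\ref{prop9.2.8} and Proposition~\ref{c9.3.1.2},'' which is precisely the combination you assemble (definition of $H_n(\mathcal{H})$ as $H_n(\text{Inf}_*(\mathcal{H}))$, the explicit formula for $H_n(\text{Inf}_*(D_*,C_*))$ recorded before Proposition~\ref{prop9.2.8}, the $\text{Inf}$--$\text{Sup}$ isomorphism, and independence of the ambient complex). Your extra remark about verifying that $G(\mathcal{H}_*)$ is a graded subgroup of $G(K_*)$ is a reasonable point of care but requires no new work.
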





The next proposition gives a geometric interpretation of the $0$-th embedded homology. 

\begin{proposition}\label{pr9.3.5}
Let $\mathcal{H}$ be a hypergraph such that all the vertices  are hyperedges. Then $\mathcal{H}_0\cup\mathcal{H}_1$ is a simplicial complex. Moreover, with integral coefficients, $H_0(\mathcal{H})=\mathbb{Z}^{\oplus k}$ where $k$ is the number of connected components of $\mathcal{H}_0\cup\mathcal{H}_1$. 
\end{proposition}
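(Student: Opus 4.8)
The plan is to first dispatch the structural claim, then reduce the homology computation to the ordinary zeroth simplicial homology of a graph.

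For the first assertion, I would verify the two defining axioms of a simplicial complex directly for $\mathcal{H}_0\cup\mathcal{H}_1$. Axiom (i) holds because the hypothesis that every vertex is a hyperedge means $\{v\}\in\mathcal{H}_0$ for each $v\in V_\mathcal{H}$. For axiom (ii), the only nontrivial case is a $1$-hyperedge $\{u,v\}\in\mathcal{H}_1$: its non-empty proper subsets are $\{u\}$ and $\{v\}$, and these lie in $\mathcal{H}_0$ precisely because every vertex is a hyperedge. Thus $\mathcal{H}_0\cup\mathcal{H}_1$ is a (one-dimensional) simplicial complex.

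For the homology, I would start from the formula of Proposition~\ref{p9.3.2.1}, which with $n=0$ gives
\[
H_0(\mathcal{H})\cong \text{Ker}(\partial_0|_{G(\mathcal{H}_0)})\big/\big(G(\mathcal{H}_0)\cap\partial_1(G(\mathcal{H}_1))\big).
\]
Since $\partial_0=0$, the kernel is all of $G(\mathcal{H}_0)$. The key step is the simplification of the intersection: for a $1$-hyperedge $\{u,v\}$ one has $\partial_1\{u,v\}=\{v\}-\{u\}$, and both $\{u\}$ and $\{v\}$ are $0$-hyperedges by hypothesis, so $\partial_1(G(\mathcal{H}_1))\subseteq G(\mathcal{H}_0)$. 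Consequently $G(\mathcal{H}_0)\cap\partial_1(G(\mathcal{H}_1))=\partial_1(G(\mathcal{H}_1))$, and therefore $H_0(\mathcal{H})\cong G(\mathcal{H}_0)/\partial_1(G(\mathcal{H}_1))$. This quotient is exactly the chain-level expression for the zeroth simplicial homology of the simplicial complex $\mathcal{H}_0\cup\mathcal{H}_1$, whose $0$- and $1$-chains are $G(\mathcal{H}_0)$ and $G(\mathcal{H}_1)$ carrying the same boundary $\partial_1$.

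It then remains to invoke the standard fact that, with $\mathbb{Z}$-coefficients, the zeroth homology of a simplicial complex is free abelian of rank equal to the number of connected components: modding $\mathbb{Z}(\mathcal{H}_0)$ by the relations $\{v\}\sim\{u\}$ coming from each edge identifies exactly the vertices lying in the same component of the graph $\mathcal{H}_0\cup\mathcal{H}_1$, yielding $\mathbb{Z}^{\oplus k}$. I expect no serious obstacle here; the only subtlety worth flagging is the precise role of the hypothesis, which is used twice and in an essential way — without it, some endpoint of an edge could fail to lie in $\mathcal{H}_0$, breaking both the closure under faces and the inclusion $\partial_1(G(\mathcal{H}_1))\subseteq G(\mathcal{H}_0)$ that drives the computation.
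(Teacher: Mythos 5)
Your proposal is correct and follows essentially the same route as the paper: both compute $H_0(\mathcal{H})=\mathrm{Ker}(\partial_0|_{\mathbb{Z}(\mathcal{H}_0)})/\bigl(\mathbb{Z}(\mathcal{H}_0)\cap\partial_1(\mathbb{Z}(\mathcal{H}_1))\bigr)$, use the hypothesis to see that $\partial_1(\mathbb{Z}(\mathcal{H}_1))\subseteq\mathbb{Z}(\mathcal{H}_0)$ so the intersection collapses, and identify the resulting quotient with $H_0(\mathcal{H}_0\cup\mathcal{H}_1)=\mathbb{Z}^{\oplus k}$. You merely spell out the details (the face-closure check and the inclusion of the boundary image) that the paper leaves implicit.
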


\begin{proof}
Since all  the vertices  are hyperedges, we see that $\mathcal{H}_0\cup\mathcal{H}_1$ is a simplicial complex. Moreover,
\begin{eqnarray*}
H_0(\mathcal{H})&=&\text{Ker}(\partial_0|_{\mathbb{Z}(\mathcal{H}_0)})/\partial_1(\mathbb{Z}(\mathcal{H}_1))\cap \mathbb{Z}(\mathcal{H}_0)\\
&=&\mathbb{Z}(\mathcal{H}_0)/\partial_1(\mathbb{Z}(\mathcal{H}_1))\\
&=&H_0(\mathcal{H}_0\cup \mathcal{H}_1). 
\end{eqnarray*}
Since $H_0(\mathcal{H}_0\cup \mathcal{H}_1)=\mathbb{Z}^k$ where $k$ is the number of connected components of $\mathcal{H}_0\cup\mathcal{H}_1$, the assertion follows. 
\end{proof}

The next proposition gives   the top embedded homology of a hypergraph.

\begin{proposition}\label{pr9.3.top}
Let $\mathcal{H}$ be a hypergraph of dimension $n$, $n\geq 0$. Then $H_n(\mathcal{H})=H_n(K_\mathcal{H})$. 
\end{proposition}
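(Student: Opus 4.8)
The plan is to reduce both homologies to the same group of top-dimensional cycles, exploiting that neither $\mathcal{H}$ nor $K_\mathcal{H}$ carries any chains above degree $n$.

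First I would establish the decisive combinatorial fact that the hypergraph and its associated simplicial complex agree in the top degree, namely $(K_\mathcal{H})_n=\mathcal{H}_n$, and that $K_\mathcal{H}$ again has dimension $n$. Every $n$-simplex of $K_\mathcal{H}$ is a non-empty subset $\tau\subseteq\sigma$ of some hyperedge $\sigma\in\mathcal{H}$ with $\tau$ having $n+1$ vertices; since the dimension of $\mathcal{H}$ is $n$, the hyperedge $\sigma$ has at most $n+1$ vertices, forcing $\tau=\sigma\in\mathcal{H}_n$. Conversely each $n$-hyperedge is an $n$-simplex of $K_\mathcal{H}$, so $(K_\mathcal{H})_n=\mathcal{H}_n$; and no hyperedge has $n+2$ vertices, so $K_\mathcal{H}$ has no $(n+1)$-simplices and its dimension is also $n$. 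This identification of the top-dimensional generators is the crux on which the rest rests.

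Next I would compute the embedded homology in the top degree. By Proposition~\ref{c9.3.1.2} I may take the ambient simplicial complex to be $K_\mathcal{H}$ itself, and Proposition~\ref{p9.3.2.1} then gives
\[
H_n(\mathcal{H})\cong\text{Ker}(\partial_n|_{G(\mathcal{H}_n)})/\big(G(\mathcal{H}_n)\cap\partial_{n+1}(G(\mathcal{H}_{n+1}))\big).
\]
Since the dimension of $\mathcal{H}$ is $n$, we have $\mathcal{H}_{n+1}=\emptyset$ and hence $G(\mathcal{H}_{n+1})=0$, so the denominator is trivial and $H_n(\mathcal{H})=\text{Ker}(\partial_n|_{G(\mathcal{H}_n)})$. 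On the other side the usual simplicial homology reads $H_n(K_\mathcal{H})=\text{Ker}(\partial_n|_{G((K_\mathcal{H})_n)})/\text{Im}(\partial_{n+1}|_{G((K_\mathcal{H})_{n+1})})$; because $K_\mathcal{H}$ has dimension $n$ the group $G((K_\mathcal{H})_{n+1})$ vanishes, so the image term is trivial, and the identification $(K_\mathcal{H})_n=\mathcal{H}_n$ from the first step gives $H_n(K_\mathcal{H})=\text{Ker}(\partial_n|_{G(\mathcal{H}_n)})$. Comparing the two expressions finishes the argument.

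I do not expect a genuine obstacle here: the entire content is the top-degree identification $(K_\mathcal{H})_n=\mathcal{H}_n$, after which both homologies collapse to the same cycle group simply because there are no $(n+1)$-chains available to form boundaries. The only point requiring care is to invoke Proposition~\ref{c9.3.1.2} so that the boundary map used for the embedded homology is literally the simplicial boundary map of $K_\mathcal{H}$, rendering the two kernels identical rather than merely isomorphic.
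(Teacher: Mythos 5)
Your proposal is correct and follows essentially the same route as the paper's own proof: both arguments rest on the observations that $\mathcal{H}_n=(K_\mathcal{H})_n$ and that $K_\mathcal{H}$ has dimension $n$, so that each homology group reduces to $\text{Ker}(\partial_n)$ on the same chain group. Your write-up merely spells out the top-degree identification and the vanishing of the $(n+1)$-chains in more detail than the paper does.
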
 

\begin{proof}
  Since the dimension of $\mathcal{H}$ is $n$,  $\mathcal{H}_i$ is empty for any $i\geq n+1$. Hence $K_\mathcal{H}$ is a simplicial complex of dimension $n$ and $\mathcal{H}_n=(K_\mathcal{H})_n$. Therefore, if we let $\partial_*$ be the boundary map of $K_\mathcal{H}$ and  take integral coefficients  for convenience,  then both $H_n(\mathcal{H})$ and $H_n(K_\mathcal{H})$ are $\text{Ker}\partial_n$. 
The assertion follows. 
\end{proof}

The next proposition gives the naturality property of the embedded  homology.

\begin{proposition}\label{pers1-prop2}
Let $f: \mathcal{H} \longrightarrow \mathcal{H}' $ be a morphism of hypergraphs.  Then   $f$ induces a homomorphism  of graded groups
$f_*: H_*( \mathcal{H})\longrightarrow H_*( \mathcal{H}')$.
\end{proposition}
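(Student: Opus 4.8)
The plan is to lift $f$ to a chain map on the ambient simplicial chains, verify that this chain map carries the infimum chain complex of $\mathcal{H}$ into that of $\mathcal{H}'$, and then pass to homology.

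First I would invoke the functor $\mathcal{F}$ constructed in Subsection~\ref{sec9.3.1}: the morphism of hypergraphs $f$ induces a simplicial map $\tilde{f}=\mathcal{F}(f): K_\mathcal{H}\longrightarrow K_{\mathcal{H}'}$ sending $\{v_0,\cdots,v_n\}$ to $\{f(v_0),\cdots,f(v_n)\}$. As for any simplicial map, $\tilde{f}$ induces a chain map $\tilde{f}_*: G((K_\mathcal{H})_*)\longrightarrow G((K_{\mathcal{H}'})_*)$, defined on an $n$-simplex $\sigma=\{v_0,\cdots,v_n\}$ by sending $\sigma$ to a $\pm$ sign times $\{f(v_0),\cdots,f(v_n)\}$ when $f$ is injective on the vertices of $\sigma$, and to $0$ otherwise, extended linearly over $G$; the sign accounts for reordering the image into the ambient total order. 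This is the standard induced chain map, so $\partial_n\circ\tilde{f}_n=\tilde{f}_{n-1}\circ\partial_n$ for all $n$.

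The key step is to show that $\tilde{f}_*$ restricts to a chain map between the infimum chain complexes, that is, $\tilde{f}_n(\text{Inf}_n(\mathcal{H}))\subseteq\text{Inf}_n(\mathcal{H}')$ for every $n$. By Proposition~\ref{p9.2.88} we have $\text{Inf}_n(\mathcal{H})=G(\mathcal{H}_n)\cap\partial_n^{-1}(G(\mathcal{H}_{n-1}))$, and likewise for $\mathcal{H}'$. The elementary ingredient is that $\tilde{f}_m$ carries $G(\mathcal{H}_m)$ into $G(\mathcal{H}'_m)$ for every $m$: each generator $\sigma\in\mathcal{H}_m$ goes either to $0$ or to $\pm f(\sigma)$, and $f(\sigma)$ is by definition a hyperedge of $\mathcal{H}'$, which is an $m$-hyperedge precisely when $f$ is injective on $\sigma$. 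Now let $\alpha\in\text{Inf}_n(\mathcal{H})$, so that $\alpha\in G(\mathcal{H}_n)$ and $\partial_n\alpha\in G(\mathcal{H}_{n-1})$. Then $\tilde{f}_n(\alpha)\in G(\mathcal{H}'_n)$, while the chain-map identity gives $\partial_n\tilde{f}_n(\alpha)=\tilde{f}_{n-1}(\partial_n\alpha)\in G(\mathcal{H}'_{n-1})$. Hence $\tilde{f}_n(\alpha)\in G(\mathcal{H}'_n)\cap\partial_n^{-1}(G(\mathcal{H}'_{n-1}))=\text{Inf}_n(\mathcal{H}')$, which is what we want.

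Finally, a chain map induces a homomorphism on homology in each degree, so setting $f_*$ to be the map induced by the restriction $\tilde{f}_*|_{\text{Inf}_*(\mathcal{H})}$ yields the required graded homomorphism $f_*: H_*(\mathcal{H})=H_*(\text{Inf}_*(\mathcal{H}))\longrightarrow H_*(\text{Inf}_*(\mathcal{H}'))=H_*(\mathcal{H}')$. The main, if mild, obstacle will be the key step: one must track the collapsing of hyperedges under a possibly non-injective $f$ — which is exactly why degenerate simplices are sent to $0$ — and combine $\tilde{f}_m(G(\mathcal{H}_m))\subseteq G(\mathcal{H}'_m)$ with the chain-map identity so as to control both conditions defining $\text{Inf}_n$ at once.
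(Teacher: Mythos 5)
Your proposal is correct and follows essentially the same route as the paper: lift $f$ to the simplicial map $\tilde f\colon K_\mathcal{H}\to K_{\mathcal{H}'}$ via the functor $\mathcal{F}$, take the induced chain map on $G((K_\mathcal{H})_*)$ (sending degenerate images to $0$), restrict to the infimum chain complexes, and pass to homology. Your key step — verifying via Proposition~\ref{p9.2.88} and the identity $\partial_n\tilde f_n=\tilde f_{n-1}\partial_n$ that $\tilde f_n(\mathrm{Inf}_n(\mathcal{H}))\subseteq \mathrm{Inf}_n(\mathcal{H}')$ — is exactly the point the paper's proof asserts without spelling out, so your write-up is if anything more complete.
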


\begin{proof}
The functor $\mathcal{F}$ sends $f$ to a simplicial map $\tilde f:K_\mathcal{H}\longrightarrow K_{\mathcal{H}'}$.  Let $\{v_0,$ $\cdots,$ $v_n\}$ be a simplex of $K_\mathcal{H}$. Given an abelian group $G$,  we let $\tilde f_G$ be the map sending $\{v_0,$ $\cdots,$ $v_n\}$ to  $\{f(v_0),$ $f(v_1),$ $\cdots,$  $f(v_n)\}$   if $f(v_0),$ $f(v_1),$ $\cdots,$  $f(v_n)$ are distinct, and sending  $\{v_0,$ $\cdots,$ $v_n\}$ to $0$ (the identity element of $G$) otherwise. Then by extending $\tilde f_G$ linearly over $G$,  we obtain  a chain map (still denoted as $\tilde f_G$) 
from the chain complex  $G({K_\mathcal{H}}_*)$ to the chain complex $G({K_{\mathcal{H}'}}_*)$.  Moreover, restricting the  chain map $\tilde f_G$ to the infimum chain complex, we obtain a chain map from $\text{Inf}_*(\mathcal{H})$ to $\text{Inf}_*(\mathcal{H}')$. This induces a homomorphism of graded groups $f_*: H_*( \mathcal{H})\longrightarrow H_*( \mathcal{H}')$.
 \end{proof}

The next example illustrates how to compute the embedded homology of hypergraphs. 

\begin{example}Let the hypergraph $\mathcal{H}=\{\{v_0\},\{v_1\},\{v_2\},\{v_0,v_1\},\{v_0,v_1,v_2\}\}$. Then its associated simplicial complex is 
\begin{eqnarray*}
K_\mathcal{H}=\{\{v_0\},\{v_1\},\{v_2\},\{v_0,v_1\},\{v_1,v_2\},\{v_0,v_2\},\{v_0,v_1,v_2\}\}.
\end{eqnarray*}
 The following picture illustrates $\mathcal{H}$ and $K_\mathcal{H}$:
 \bigskip
 
 \begin{center}
\begin{tikzpicture}
\coordinate [label=left:$v_0$]  (A) at (2,0); 
\coordinate [label=right:$v_1$]  (B) at (5,0); 
   \coordinate [label=right:$\mathcal{H}$:]  (F) at (1,1); 
 \draw [line width=2.5pt] (A) -- (B);
 \coordinate [label=right:$v_2$]  (C) at (4,1.5); 
 \draw [dotted] (B) -- (C);
  \draw [dotted] (A) -- (C);        
 \fill [gray!15!white] (A) -- (B) -- (C) -- cycle;
  \coordinate [label=left:$v_0$]  (A) at (9,0); 
  \coordinate [label=right:$v_1$]  (B) at (12,0); 
  \coordinate [label=right:$K_\mathcal{H}$:]  (F) at (8,1); 
 \draw [line width=2.5pt] (A) -- (B);
 \coordinate [label=right:$v_2$]  (C) at (11,1.5); 
 \draw[line width=2.5pt] (C) -- (B);
\draw [line width=2.5pt]   (A) -- (C);
\fill [gray!15!white] (A) -- (B) -- (C) -- cycle;
\fill (2,0) circle (2.5pt) (5,0) circle (2.5pt)  (4,1.5) circle (2.5 pt);        
\fill (9,0) circle (2.5pt) (12,0) circle (2.5pt)  (11,1.5) circle (2.5 pt);        
\end{tikzpicture}
\end{center}

\noindent Moreover, if we let $\partial_n$ be the boundary map of ${K}_\mathcal{H}$ and take integral coefficients, then  
\begin{eqnarray*}
H_0(\mathcal{H})
&=&\text{Ker}(\partial_0|_{\mathbb{Z}(\{v_0\},\{v_1\},\{v_2\})})/ \partial_1(\mathbb{Z}(\{v_0,v_1\}))\cap\mathbb{Z}(\{v_0\},\{v_1\},\{v_2\})\\
&=&\mathbb{Z}(\{v_0\},\{v_1\},\{v_2\})/\mathbb{Z}(\{v_1\}-\{v_0\})\cap \mathbb{Z}(\{v_0\},\{v_1\},\{v_2\})\\
&=&\mathbb{Z}\oplus \mathbb{Z },\\
 H_1(\mathcal{H})&=&\text{Ker}(\partial_1|_{\mathbb{Z}(\{v_0,v_1\})})/ \partial_2(\mathbb{Z}(\{v_0,v_1,v_2\}))\cap\mathbb{Z}(\{v_0,v_1\})\\
 &=&0,\\
H_2(\mathcal{H})&=&\text{Ker}(\partial_2|_{\mathbb{Z}(\{v_0,v_1,v_2\})})/ 0\cap\mathbb{Z}(\{v_0,v_1,v_2\})\\
 &=&0.
 \end{eqnarray*}
\end{example}

\subsection{The Mayer-Vietoris Sequence for the Embedded Homology of  Hypergraphs}\label{sec9.3.3}



In the  homology  theory of simplicial complexes, the Mayer-Vietoris sequence is an algebraic tool to  compute the homology  groups of simplicial complexes.  With the Mayer-Vietoris sequence, we are able  to reduce the homology of a complicated simplicial complex  $K$  into the homology of simpler simplicial complexes  $K_1,K_2\subseteq K$  such that  $K$  is the union of $K_1$ and $K_2$.  In this subsection, we give a Mayer-Vietoris sequence for the embedded homology of hypergraphs in Theorem~\ref{c9.3.3.1}. The Mayer-Vietoris sequence for the embedded homology of hypergraphs allows us to reduce the embedded homology of a complicated hypergraph to the embedded homology of simpler hypergraphs. 


\smallskip

Let $\mathcal{H} $ and $\mathcal{H}' $ be two hypergraphs. With the help of Proposition~\ref{pr9.2.18}, we have  the following short exact sequences  
\begin{eqnarray}
&0\longrightarrow \text{Inf}_*(\mathcal{H})\cap \text{Inf}_*(\mathcal{H}')\longrightarrow \text{Inf}_*(\mathcal{H})\oplus \text{Inf}_*(\mathcal{H}') \nonumber \\
&\longrightarrow \text{Inf}_*(\mathcal{H})+ \text{Inf}_*(\mathcal{H}')\longrightarrow 0, \label{9.4.8} \\
&0\longrightarrow  \text{Inf}_*(\mathcal{H})+ \text{Inf}_*(\mathcal{H}')\longrightarrow  \text{Inf}_*(\mathcal{H}\cup \mathcal{H}')\nonumber\\
&\longrightarrow \text{Inf}_*(\mathcal{H}\cup \mathcal{H}')/ (\text{Inf}_*(\mathcal{H})+ \text{Inf}_*(\mathcal{H}'))\longrightarrow 0.\label{9.4.88}
\end{eqnarray}
Moreover, we have the next proposition.

\begin{proposition}\label{p9.4.1}
Let $\mathcal{H}$ and $\mathcal{H}'$ be two hypergraphs such that     for any $\sigma\in\mathcal{H}$ and any $\sigma'\in \mathcal{H}'$, either $\sigma\cap\sigma'$ is empty or $\sigma\cap \sigma'\in \mathcal{H}\cap\mathcal{H}'$.  Then we have a short exact sequence
\begin{eqnarray*}
0\longrightarrow \text{Inf}_*(\mathcal{H})\cap \text{Inf}_*(\mathcal{H}')\longrightarrow \text{Inf}_*(\mathcal{H})\oplus \text{Inf}_*(\mathcal{H}') 
\longrightarrow \text{Inf}_*(\mathcal{H}\cup \mathcal{H}')\longrightarrow 0.
\end{eqnarray*}
\end{proposition}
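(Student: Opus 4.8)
The plan is to obtain the desired short exact sequence from the already-established sequence (\ref{9.4.8}) by proving the single graded identity
\begin{eqnarray*}
\text{Inf}_*(\mathcal{H})+\text{Inf}_*(\mathcal{H}')=\text{Inf}_*(\mathcal{H}\cup\mathcal{H}').
\end{eqnarray*}
Once this holds, the third term of (\ref{9.4.8}) may be replaced verbatim by $\text{Inf}_*(\mathcal{H}\cup\mathcal{H}')$, which is exactly the asserted sequence; no new exactness has to be checked, since the maps and their exactness are precisely those of (\ref{9.4.8}). Thus the whole content of the proposition resides in this identity, and the intersection hypothesis will be used only to establish it.

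Writing $C_*=G(K_*)$ for an ambient simplicial complex and $A_*=G(\mathcal{H}_*)$, $B_*=G(\mathcal{H}'_*)$, the inclusion $\supseteq$ is free. Indeed, the $n$-hyperedges of $\mathcal{H}\cup\mathcal{H}'$ are the union $\mathcal{H}_n\cup\mathcal{H}'_n$, so $G((\mathcal{H}\cup\mathcal{H}')_*)=A_*+B_*$ and hence $\text{Inf}_*(\mathcal{H}\cup\mathcal{H}')=\text{Inf}_*(A_*+B_*,C_*)$; the second line of Proposition~\ref{pr9.2.18} then gives $\text{Inf}_*(A_*+B_*,C_*)\supseteq\text{Inf}_*(A_*,C_*)+\text{Inf}_*(B_*,C_*)$. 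For the reverse inclusion I would argue elementwise using the explicit formula $\text{Inf}_n(\mathcal{H}\cup\mathcal{H}')=G((\mathcal{H}\cup\mathcal{H}')_n)\cap\partial_n^{-1}(G((\mathcal{H}\cup\mathcal{H}')_{n-1}))$ from Proposition~\ref{p9.2.88}. Since the $n$-hyperedges sit inside a basis of $C_n$, any $\alpha\in\text{Inf}_n(\mathcal{H}\cup\mathcal{H}')$ decomposes uniquely as $\alpha=p+q+r$ with $p$ supported on $\mathcal{H}_n\setminus\mathcal{H}'_n$, $q$ on $\mathcal{H}_n\cap\mathcal{H}'_n$, and $r$ on $\mathcal{H}'_n\setminus\mathcal{H}_n$. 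I would set $\beta=p+q\in G(\mathcal{H}_n)$ and $\beta'=r\in G(\mathcal{H}'_n)$ and show that $\partial_n\beta\in G(\mathcal{H}_{n-1})$ and $\partial_n\beta'\in G(\mathcal{H}'_{n-1})$, i.e. that $\beta\in\text{Inf}_n(\mathcal{H})$ and $\beta'\in\text{Inf}_n(\mathcal{H}')$, whence $\alpha\in\text{Inf}_n(\mathcal{H})+\text{Inf}_n(\mathcal{H}')$.

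The heart of the argument, and the only place the hypothesis enters, is controlling which $(n-1)$-faces can survive in $\partial_n\beta$ and $\partial_n\beta'$. The key geometric observation is twofold: if an $(n-1)$-set $\tau$ is a common face of some $\sigma\in\mathcal{H}_n$ occurring in $\beta$ and some $\sigma'\in\mathcal{H}'_n$ occurring in $\beta'$, then $\sigma\neq\sigma'$ forces, by a cardinality count, $\sigma\cap\sigma'=\tau$, so $\tau\in\mathcal{H}\cap\mathcal{H}'$ by hypothesis; and if $\tau$ is itself a hyperedge of one of $\mathcal{H},\mathcal{H}'$ while being a face of a hyperedge of the other, applying the hypothesis with $\tau$ in the role of one argument again yields $\tau\in\mathcal{H}\cap\mathcal{H}'$. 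Running through the faces $\tau$ carrying a nonzero coefficient in $\partial_n\beta'$, and using that $\partial_n\alpha\in G((\mathcal{H}\cup\mathcal{H}')_{n-1})$ forces cancellation on any $\tau\notin\mathcal{H}_{n-1}\cup\mathcal{H}'_{n-1}$, these observations eliminate every face not lying in $\mathcal{H}'_{n-1}$, giving $\partial_n\beta'\in G(\mathcal{H}'_{n-1})$; the symmetric bookkeeping gives $\partial_n\beta\in G(\mathcal{H}_{n-1})$.

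I expect the delicate point to be precisely this face-by-face accounting: one must separate the cases $\tau\in\mathcal{H}_{n-1}$, $\tau\in\mathcal{H}'_{n-1}$, and $\tau\notin\mathcal{H}_{n-1}\cup\mathcal{H}'_{n-1}$, and in each case invoke the intersection hypothesis in the correct form. It is worth noting that the hypothesis is genuinely necessary, which is what makes this the crux: for the split $\mathcal{H}=\{\{v_0,v_1\},\{v_1,v_2\}\}$, $\mathcal{H}'=\{\{v_0,v_2\}\}$ (no vertices taken as hyperedges) the hypothesis fails, the fundamental $1$-cycle of the triangle lies in $\text{Inf}_1(\mathcal{H}\cup\mathcal{H}')$ but not in $\text{Inf}_1(\mathcal{H})+\text{Inf}_1(\mathcal{H}')=0$, and the identity, hence the short exact sequence, breaks down.
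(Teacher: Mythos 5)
Your proposal is correct and follows essentially the same route as the paper: reduce to the graded identity $\text{Inf}_*(\mathcal{H})+\text{Inf}_*(\mathcal{H}')=\text{Inf}_*(\mathcal{H}\cup\mathcal{H}')$, split $\alpha$ into a part supported on $\mathcal{H}_n$ and a part supported on $\mathcal{H}'_n\setminus\mathcal{H}_n$, and use the cardinality observation $\sigma\cap\sigma'=\tau$ together with the intersection hypothesis to rule out bad $(n-1)$-faces. Your face-by-face case analysis (in particular the sub-case where $\tau$ lies in one of $\mathcal{H}_{n-1}$, $\mathcal{H}'_{n-1}$ but not the other) is in fact slightly more explicit than the paper's passage from $\gamma_1+\gamma_2\in G(\mathcal{H}_{n-1})+G(\mathcal{H}'_{n-1})$ to $\gamma_1+\gamma_2=0$, and your counterexample confirming the necessity of the hypothesis is a worthwhile addition.
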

\begin{proof}
We choose $K_{\mathcal{H}\cup\mathcal{H}'}$ as the ambient simplicial complex in Proposition~\ref{c9.3.1.2} to construct $\text{Inf}_*(\mathcal{H})$,  $\text{Inf}_*(\mathcal{H}')$, $\text{Inf}_*(\mathcal{H}\cap\mathcal{H}')$ and $\text{Inf}_*(\mathcal{H}\cup\mathcal{H}')$. 
Let $\alpha\in \text{Inf}_n(\mathcal{H}\cup \mathcal{H'})$.  Then 
\begin{eqnarray}\label{eq9.4.18}
\alpha\in G(\mathcal{H}_n\cup\mathcal{H}'_n) 
\end{eqnarray}
 and 
 \begin{eqnarray}\label{eq9.4.28}
 \partial_n \alpha\in G(\mathcal{H}_{n-1}\cup\mathcal{H}'_{n-1}).
 \end{eqnarray} 
Moreover, for any non-negative integer $i$,
 \begin{eqnarray}\label{eq9.4.3}
G(\mathcal{H}_{i}\cup\mathcal{H}'_{i})=G(\mathcal{H}_i)+G(\mathcal{H}'_i). 
 \end{eqnarray} 
 From (\ref{eq9.4.18}) and (\ref{eq9.4.3}), we can assume that $\alpha=\alpha_1+\alpha_2$ where $\alpha_1\in G(\mathcal{H}_n)$ and $\alpha_2\in G(\mathcal{H}'_n\setminus \mathcal{H}_n)$. 
 We can also assume that 
 \begin{eqnarray}\label{eq9.4.5}
 \partial_n \alpha_1=\beta_1+\gamma_1
 \end{eqnarray}
  where $\beta_1\in G(\mathcal{H}_{n-1})$, $\gamma_1\in G(\mathcal{H}^c_{n-1})$; and 
 \begin{eqnarray}\label{eq9.4.6}
  \partial_n \alpha_2=\beta_2+\gamma_2
  \end{eqnarray}
   where $\beta_2\in G(\mathcal{H}'_{n-1})$, $\gamma_2\in G(\mathcal{H}'^c_{n-1})$.  From (\ref{eq9.4.28}) - (\ref{eq9.4.6}),  we have
      \begin{eqnarray*}
 \gamma_1+\gamma_2\in G(\mathcal{H}_{n-1})+G(\mathcal{H}'_{n-1}).
   \end{eqnarray*} 
It follows that
       \begin{eqnarray}\label{eq9.4.10}
        \gamma_1+\gamma_2=0.
    \end{eqnarray}   
Suppose 
      \begin{eqnarray}\label{9.mv.1}
\gamma_1=\sum_t k_t d_i\sigma_t,\\
\label{9.mv.2}
 \gamma_2=\sum_s h_t d_i\sigma'_s,
      \end{eqnarray}
 where $k_t, h_s\in G$, $\sigma_t\in \mathcal{H}_n$,  $\sigma'_s\in \mathcal{H}'_n$, $d_i$'s are the face maps of $K_{\mathcal{H}\cup\mathcal{H}'}$,  $d_i\sigma_t\in \mathcal{H}^c_{n-1}$ and  $d_i\sigma'_s\in \mathcal{H}'^c_{n-1}$. 
We claim that for any summand $ k_t d_i\sigma_t$ of (\ref{9.mv.1}) and any summand $h_t d_i\sigma'_s$  of (\ref{9.mv.2}), 
\begin{eqnarray}\label{9.mv.3}
d_i\sigma_t\neq d_j\sigma_s'.
\end{eqnarray}
To prove (\ref{9.mv.3}), we  suppose to the contrary that $d_i\sigma_t= d_j\sigma_s'$ for some $i$, $j$, $t$ and $s$.  Then we have the next two cases. 

{\sc Case~1}. $\sigma_t=\sigma'_s$.

By multiplying certain coefficient in $G$, $\sigma_t$ is a summand of $\alpha_1$. Hence $\sigma_t\in \mathcal{H}$. And by multiplying another coefficient in $G$, $\sigma'_s$ is a summand of $\alpha_2$.  Hence $\sigma'_s\in\mathcal{H}'\setminus\mathcal{H}$. This contradicts $\sigma_t=\sigma_s'$.

{\sc Case~2}. $\sigma_t\neq \sigma'_s$.

Then $d_i\sigma_t=d_j\sigma'_s=\sigma_t\cap\sigma'_s$, which is non-empty. Moreover, since $\sigma_t\in\mathcal{H}$ and $\sigma_s'\in\mathcal{H}'$, we have $\sigma_t\cap\sigma'_s\in \mathcal{H}\cap \mathcal{H}'$.  Therefore, $d_i\sigma_t\in\mathcal{H}$ and $d_j\sigma_s'\in\mathcal{H}'$. This contradicts the assumption on $\gamma_1$ and $\gamma_2$.  

Summarising Case~1 and Case~2, we have (\ref{9.mv.3}).  It follows from (\ref{eq9.4.10}) - (\ref{9.mv.3}) that 
 $\gamma_1=\gamma_2=0$. 
Therefore,  the canonical homomorphism from $\text{Inf}_*(\mathcal{H})\oplus \text{Inf}_*(\mathcal{H}') 
$ to $ \text{Inf}_*(\mathcal{H}\cup \mathcal{H}')$ is an epimorphism.  Consequently, the short exact sequence (\ref{9.4.88}) is trivial,  and  the short exact sequence (\ref{9.4.8}) implies the assertion. 
\end{proof}

The following theorem is an immediate consequence of Proposition~\ref{p9.4.1}. 

\begin{theorem}\label{c9.3.3.1}
Let $\mathcal{H}$ and $\mathcal{H}'$ be two hypergraphs such that   for any $\sigma\in\mathcal{H}$ and any $\sigma'\in \mathcal{H}'$,  either $\sigma\cap\sigma'$ is empty or $\sigma\cap \sigma'\in \mathcal{H}\cap\mathcal{H}'$.  Then we have a long exact sequence of homology
\begin{eqnarray*}
\cdots\longrightarrow H_n(\mathcal{H}\cap\mathcal{H}')\longrightarrow H_n(\mathcal{H})\oplus H_n(\mathcal{H}') 
\longrightarrow H_n(\mathcal{H}\cup \mathcal{H}')\longrightarrow H_{n-1}(\mathcal{H}\cap\mathcal{H}')\longrightarrow\cdots.
\end{eqnarray*}
\end{theorem}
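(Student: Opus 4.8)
The plan is to derive Theorem~\ref{c9.3.3.1} from Proposition~\ref{p9.4.1} by the standard homological algebra machinery, treating the short exact sequence of chain complexes as a black box and producing the long exact sequence via the zig-zag lemma. Under the stated intersection hypothesis, Proposition~\ref{p9.4.1} already gives us the short exact sequence of chain complexes
\begin{eqnarray*}
0\longrightarrow \text{Inf}_*(\mathcal{H})\cap \text{Inf}_*(\mathcal{H}')\longrightarrow \text{Inf}_*(\mathcal{H})\oplus \text{Inf}_*(\mathcal{H}')\longrightarrow \text{Inf}_*(\mathcal{H}\cup \mathcal{H}')\longrightarrow 0,
\end{eqnarray*}
so the first step is simply to invoke this. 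The second step is to recall the general fact (the snake/zig-zag lemma) that any short exact sequence of chain complexes $0\to A_*\to B_*\to C_*\to 0$ induces a natural long exact sequence $\cdots\to H_n(A_*)\to H_n(B_*)\to H_n(C_*)\to H_{n-1}(A_*)\to\cdots$ in homology. Applying this to the sequence above yields the long exact sequence with middle term $H_n(\text{Inf}_*(\mathcal{H}))\oplus H_n(\text{Inf}_*(\mathcal{H}'))$, which by definition of the embedded homology is $H_n(\mathcal{H})\oplus H_n(\mathcal{H}')$, and with right term $H_n(\text{Inf}_*(\mathcal{H}\cup\mathcal{H}'))=H_n(\mathcal{H}\cup\mathcal{H}')$.

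The one identification that genuinely needs checking is the leftmost term: I must verify that $H_n\bigl(\text{Inf}_*(\mathcal{H})\cap \text{Inf}_*(\mathcal{H}')\bigr)$ equals $H_n(\mathcal{H}\cap\mathcal{H}')=H_n(\text{Inf}_*(\mathcal{H}\cap\mathcal{H}'))$. This is exactly where the intersection hypothesis enters a second time. By the first identity of Proposition~\ref{pr9.2.18} applied with the ambient complex $K_{\mathcal{H}\cup\mathcal{H}'}$, we have $\text{Inf}_*(D_*\cap D_*',C_*)=\text{Inf}_*(D_*,C_*)\cap\text{Inf}_*(D_*',C_*)$ as graded groups; taking $D_*=G(\mathcal{H}_*)$ and $D_*'=G(\mathcal{H}'_*)$, this reads $\text{Inf}_*(G(\mathcal{H}_*)\cap G(\mathcal{H}'_*))=\text{Inf}_*(\mathcal{H})\cap\text{Inf}_*(\mathcal{H}')$. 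It then remains to observe that under the hypothesis ``$\sigma\cap\sigma'$ empty or in $\mathcal{H}\cap\mathcal{H}'$'' we have $G(\mathcal{H}_*)\cap G(\mathcal{H}'_*)=G((\mathcal{H}\cap\mathcal{H}')_*)$ at the level of generators, so the leftmost homology is indeed $H_n(\mathcal{H}\cap\mathcal{H}')$.

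The main obstacle I anticipate is precisely this bookkeeping around the intersection term, since $\text{Inf}_*$ does not in general commute with intersection on the nose but the equality of the relevant graded groups holds under the given hypothesis; I would want to state the group-level identity $G(\mathcal{H}_n)\cap G(\mathcal{H}'_n)=G((\mathcal{H}\cap\mathcal{H}')_n)$ explicitly, noting it holds because the hyperedge sets $\mathcal{H}_n$ and $\mathcal{H}'_n$ are bases of the free parts and the common basis elements are exactly the $n$-hyperedges lying in both. Everything else is a formal consequence of the long exact homology sequence, and the naturality of the connecting homomorphism is inherited from the standard construction, so no additional work is needed there. I would therefore keep the proof short: cite Proposition~\ref{p9.4.1} for the short exact sequence, invoke the induced long exact sequence in homology, and use Proposition~\ref{pr9.2.18} together with the generator-level identity to rewrite the three homology terms in their final hypergraph form.
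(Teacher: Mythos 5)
Your proposal is correct and follows essentially the same route as the paper, which simply declares the theorem an immediate consequence of Proposition~\ref{p9.4.1} via the induced long exact sequence, with the identification of the intersection term handled exactly as you do through Proposition~\ref{pr9.2.18} and the ambient complex $K_{\mathcal{H}\cup\mathcal{H}'}$. The only quibble is that the generator-level identity $G(\mathcal{H}_n)\cap G(\mathcal{H}'_n)=G((\mathcal{H}\cap\mathcal{H}')_n)$ holds unconditionally for coordinate subgroups of $G((K_{\mathcal{H}\cup\mathcal{H}'})_n)$ and does not itself require the intersection hypothesis, which is needed only for the surjectivity established in Proposition~\ref{p9.4.1}.
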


The long exact sequence given in Theorem~\ref{c9.3.3.1} is called the {\bf Mayer-Vietoris sequence} for the embedded homology of hypergraphs. 
 
\begin{remark}\label{r9.3.3.1}
Generally, let   $\mathcal{H}$ and $\mathcal{H}'$ be two arbitrary hypergraphs.  By (\ref{9.4.8}) and (\ref{9.4.88}), we have two long exact sequences of homology
\begin{eqnarray*}
&
\cdots\longrightarrow H_n(\mathcal{H}\cap\mathcal{H}')\longrightarrow H_n(\mathcal{H})\oplus H_n(\mathcal{H}') \\
&
\longrightarrow H_n(\text{Inf}_*(\mathcal{H})+ \text{Inf}_*(\mathcal{H}'))\longrightarrow H_{n-1}(\mathcal{H}\cap\mathcal{H}')\longrightarrow\cdots,\\
&
\cdots\longrightarrow H_n(\text{Inf}_*(\mathcal{H})+ \text{Inf}_*(\mathcal{H}'))\longrightarrow H_n(\mathcal{H}\cup \mathcal{H}') \\
&
\longrightarrow H_n(\mathcal{H}\cup \mathcal{H}'/(\text{Inf}_*(\mathcal{H})+ \text{Inf}_*(\mathcal{H}')))\longrightarrow H_{n-1}(\text{Inf}_*(\mathcal{H})+ \text{Inf}_*(\mathcal{H}'))\longrightarrow\cdots.
\end{eqnarray*}
\end{remark}

The next example illustrates how to use the Mayer-Vietoris sequence to simplify the computation of the embedded homology of hypergraphs. 

\begin{example}
(i). 
Let $\mathcal{H}$ and $\mathcal{H}'$ be hypergraphs such that (a).  for any $\sigma\in\mathcal{H}$ and any $\sigma'\in \mathcal{H}'$,  either $\sigma\cap\sigma'$ is empty or $\sigma\cap \sigma'\in \mathcal{H}\cap\mathcal{H}'$;  (b).  the intersection of $\mathcal{H}$ and $\mathcal{H}'$ is a disjoint union of standard simplicial complexes
\begin{eqnarray*}
\mathcal{H}\cap\mathcal{H}'=\coprod_{i=1}^k \Delta[n_i]. 
\end{eqnarray*}
Then  since each $\Delta[n_i]$ is contractible,  for any $n\geq 1$,
\begin{eqnarray*}
H_n(\mathcal{H}\cap\mathcal{H}')=0.  
\end{eqnarray*}
Hence by Theorem~\ref{c9.3.3.1}, for any $n\geq 2$, 
 \begin{eqnarray*}
 H_n(\mathcal{H}\cup\mathcal{H}')\cong H_n(\mathcal{H})\oplus H_n(\mathcal{H}'). 
 \end{eqnarray*}

(ii). Suppose $\mathcal{H}^j$, $1\leq j\leq m$, is a sequence of hypergraphs such that for any $j_1\neq j_2$,  
(a). 
 for any $\sigma\in\mathcal{H}^{j_1}$ and any $\sigma'\in \mathcal{H}^{j_2}$,  either $\sigma\cap\sigma'$ is empty or $\sigma\cap \sigma'\in \mathcal{H}^{j_1}\cap\mathcal{H}^{j_2}$;
 (b). 
 $\mathcal{H}^{j_1}\cap\mathcal{H}^{j_2}$ is a disjoint union of standard simplicial complexes.
  Then by (i) and an induction on $m$, for any $n\geq 2$, 
 \begin{eqnarray*}
 H_n(\cup_{j=1}^m\mathcal{H}^j)\cong \oplus_{j=1}^mH_n(\mathcal{H}^j). 
 \end{eqnarray*}
 
 (iii). A concrete example of (ii) is as follows. For each $j=0,1,2,3$, let 
 \begin{eqnarray*}
 \mathcal{H}^j=\Delta[v_0,v_1,v_2,v_3]\cup \{\{v_0,\ldots, \hat{v_j},\ldots,v_3,w_j\} \}  
 \end{eqnarray*}
 where $\hat{v_j}$ stands for removing $v_j$.  
 Then by (ii), with integral coefficients, 
 \begin{eqnarray*}
 H_2(\cup_{j=1}^3\mathcal{H}^j)\cong \oplus _{j=1}^3 H_2(\mathcal{H}^j)\cong 0. 
 \end{eqnarray*}
\end{example}



 
\section{The Persistent Embedded  Homology  of Hypergraphs and Mayer-Vietoris Sequence} \label{sec9.7}

Persistent  homology  has been used to study the  invariant topological structures of sequences of topological objects.  Some algorithms to compute  persistent homology of simplicial complexes have been given by A. Zomorodian and G. Carlsson \cite{disc2004}.  In this section, we study the persistent embedded  homology  of sequences of hypergraphs and give a persistent version of Mayer-Vietoris sequence for the persistent embedded homology of hypergraphs, in Theorem~\ref{c9.4.8}.

\smallskip
 
Before studying the persistent embedded homology of hypergraphs, we review some  definitions.   A {\bf persistence complex}  $\mathcal{C}=\{C^i_*,\varphi_i\}_{i\geq 0}$ is a family of chain complexes $\{C^i_*\}_{i\geq 0}$, together with chain maps $\varphi_i: C_*^i\longrightarrow C^{i+1}_*$ (cf. \cite[Definition~3.1]{disc2004}).  A {\bf persistence abelian group } $\mathcal{G}=\{G^i,\psi_i\}_{i\geq 0}$ is a family of abelian groups $G^i$, together with group homomorphisms $\psi_i: G^i\longrightarrow G^{i+1}$. 
A persistence complex $\mathcal{C}$ (resp. a persistence abelian group $\mathcal{G}$) is called of {\bf finite type} if each component complex $C^i$ (resp. $G^i$) is a finitely generated abelian group and there exists an integer $N$ such that the maps $\varphi_i$ (resp. $\psi_i$) are isomorphisms for all $i\geq N$ (cf. \cite[Definition~3.3]{disc2004}).

\smallskip

Now we turn to the persistent embedded homology of hypergraphs.  We consider a sequence of hypergraphs with morphisms
\begin{eqnarray}\label{eq9.4.add}
\mathcal{H}^0\overset{f_0}{\longrightarrow}\mathcal{H}^1\overset{f_1}{\longrightarrow}\mathcal{H}^2\overset{f_2}{\longrightarrow}\cdots\overset{f_{i-1}}{\longrightarrow}\mathcal{H}^i\overset{f_i}{\longrightarrow}\cdots. 
\end{eqnarray}
Given an abelian group $G$,  the sequence (\ref{eq9.4.add}) induces  persistence complexes
\begin{eqnarray}\label{eq9.4.81}
&&\{\text{Inf}_*(G(\mathcal{H}^i_*),G(({K_{\mathcal{H}^i}})_* )), \tilde f_{i,G}\}_{i\geq 0},\\
&&\{\text{Sup}_*(G(\mathcal{H}^i_*),G(({K_{\mathcal{H}^i}})_* )), \tilde f_{i,G}\}_{i\geq 0}. \label{eq9.4.82}
\end{eqnarray}
Here  $\tilde f_{i,G}$ are the maps given  in the proof of Proposition~\ref{pers1-prop2}. Moreover, 
both (\ref{eq9.4.81}) and (\ref{eq9.4.82}) induce  a persistence  abelian group
\begin{eqnarray}\label{pers-ho}
H_*(\mathcal{H}^0)\overset{(f_0)_*}{\longrightarrow}H_*(\mathcal{H}^1)\overset{(f_1)_*}{\longrightarrow}H_*(\mathcal{H}^2)\overset{(f_2)_*}{\longrightarrow}\cdots. 
\end{eqnarray}
  We call the sequence (\ref{pers-ho}) the {\bf persistent embedded homology}   of  the sequence (\ref{eq9.4.add}).   

 \smallskip
 
A  {\bf filtration} of hypergraphs is a sequence (\ref{eq9.4.add}) such that each $f_i$, $i\geq 0$, is injective.  
Given a hypergraph $\mathcal{H}$, there are canonical ways  to construct a filtration of hypergraphs.  We give one way to construct a filtration as follows.

Let $d$ be a distance function on the set of vertices $V_\mathcal{H}$ of a hypergraph $\mathcal{H}$.  For any $r>0$,  we let $K(\mathcal{H},r)$ be the simplicial complex consisting of all simplices of the form
\begin{eqnarray*}
\{v_0,\cdots, v_n\mid n\geq 0,  v_0,\cdots,v_n\in V_\mathcal{H} \text{ and } d(v_i,v_j)< r\} 
\end{eqnarray*}
and  let the hypergraph 
\begin{eqnarray*}
\mathcal{H}(r)=\mathcal{H}\cap K(\mathcal{H},r).
\end{eqnarray*}
 Then for any increasing sequence of numbers $0<r_1<r_2<\cdots<r_k<\cdots$, we  get a filtration of hypergraphs
\begin{eqnarray}\label{eq.fil.1}
\mathcal{H}(r_1)\subseteq \mathcal{H}(r_2)\subseteq\cdots\subseteq \mathcal{H}(r_k)\subseteq\cdots\subseteq \mathcal{H}. 
\end{eqnarray}
Since all hypergraphs are assumed to have finite vertices,  there exists a positive $R$ such that for any $r>R$, $\mathcal{H}(r)=\mathcal{H}(R)$.  Therefore, the persistent  homology  of the filtration (\ref{eq.fil.1}) is of finite type. 
Moreover, if  $\lim_{k\to\infty}r_k=\infty$,  then $\mathcal{H}(r_k)$  converges to $\mathcal{H}$. In this case,  the persistent   embedded homology $H_*(\mathcal{H}(r_k))$ converges to $H_*(\mathcal{H})$.

\smallskip

Generalising the Mayer-Vietoris sequence of the embedded homology to the persistent embedded homology,  the  next theorem follows from Proposition~\ref{c9.3.3.1}. 

\begin{theorem}\label{c9.4.8}
Let $\mathcal{H}$ and $\mathcal{H}'$ be two hypergraphs such that   for any $\sigma\in\mathcal{H}$ and any $\sigma'\in \mathcal{H}'$, either $\sigma\cap\sigma'$ is empty or $\sigma\cap \sigma'\in \mathcal{H}\cap\mathcal{H}'$.  Let $d$ be a distance function on $\mathcal{H}\cup \mathcal{H}'$. Then for any $0<r_1<r_2<\cdots<r_k<\cdots$,  we have  long exact sequences of embedded homology in each row of the following commutative diagram
{
\small 
\begin{eqnarray*}
\xymatrix{
\cdots H_n(\mathcal{H}\cap\mathcal{H}')\ar[r] & H_n(\mathcal{H})\oplus H_n(\mathcal{H}') 
\ar[r] & H_n(\mathcal{H}\cup \mathcal{H}')\ar[r] & H_{n-1}(\mathcal{H}\cap\mathcal{H}') \cdots\\
 \cdots \ar[u]& \cdots \ar[u]&\cdots \ar[u]&\cdots \ar[u] \\
\cdots H_n(\mathcal{H}(r_k)\cap\mathcal{H}'(r_k))\ar[r]\ar[u] & H_n(\mathcal{H}(r_k))\oplus H_n(\mathcal{H}'(r_k)) 
\ar[r] \ar[u]& H_n(\mathcal{H}(r_k)\cup \mathcal{H}'(r_k))\ar[r]\ar[u] & H_{n-1}(\mathcal{H}(r_k)\cap\mathcal{H}'(r_k))\ar[u]  \cdots\\
\cdots \ar[u]& \cdots \ar[u]&\cdots \ar[u]&\cdots \ar[u] \\
\cdots H_n(\mathcal{H}(r_1)\cap\mathcal{H}'(r_1))\ar[r]\ar[u] & H_n(\mathcal{H}(r_1))\oplus H_n(\mathcal{H}'(r_1)) 
\ar[r] \ar[u]& H_n(\mathcal{H}(r_1)\cup \mathcal{H}'(r_1))\ar[r]\ar[u] & H_{n-1}(\mathcal{H}(r_1)\cap\mathcal{H}'(r_1))\ar[u]  \cdots
}
\end{eqnarray*}
}
\end{theorem}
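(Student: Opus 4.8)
The plan is to recognise the whole diagram as the commutative ladder of long exact sequences attached to a directed system of short exact sequences of infimum chain complexes, and then to combine two facts already available: Theorem~\ref{c9.3.3.1}, which produces each individual row, and the naturality of the connecting homomorphism in homological algebra, which glues the rows together into a commutative diagram. The vertical arrows are throughout the maps induced by the filtration inclusions $\mathcal{H}(r_k)\hookrightarrow\mathcal{H}(r_{k+1})\hookrightarrow\cdots\hookrightarrow\mathcal{H}$, so the real content is to check that these inclusions are compatible with the Mayer--Vietoris decomposition at the level of chain complexes.

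First I would verify that the intersection hypothesis is inherited by every stage of the filtration, so that Theorem~\ref{c9.3.3.1} applies to each pair $(\mathcal{H}(r_k),\mathcal{H}'(r_k))$ as well as to $(\mathcal{H},\mathcal{H}')$. Writing $\mathcal{H}(r)=\mathcal{H}\cap K(\mathcal{H}\cup\mathcal{H}',r)$ and $\mathcal{H}'(r)=\mathcal{H}'\cap K(\mathcal{H}\cup\mathcal{H}',r)$, one has $\mathcal{H}(r)\cap\mathcal{H}'(r)=(\mathcal{H}\cap\mathcal{H}')\cap K(\mathcal{H}\cup\mathcal{H}',r)$. If $\sigma\in\mathcal{H}(r)$ and $\sigma'\in\mathcal{H}'(r)$ with $\sigma\cap\sigma'$ non-empty, then $\sigma\cap\sigma'\in\mathcal{H}\cap\mathcal{H}'$ by hypothesis, and since $\sigma\cap\sigma'\subseteq\sigma$ all of its vertices have pairwise distance $<r$, so $\sigma\cap\sigma'\in K(\mathcal{H}\cup\mathcal{H}',r)$; hence $\sigma\cap\sigma'\in\mathcal{H}(r)\cap\mathcal{H}'(r)$. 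Thus the hypothesis of Theorem~\ref{c9.3.3.1} holds at every radius, and each row is a genuine Mayer--Vietoris sequence.

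Next I would arrange for all rows to come from a single common ambient simplicial complex. By Proposition~\ref{c9.3.1.2} the infimum chain complex of a hypergraph is independent of the ambient complex into which it is embedded, so I may use $K_{\mathcal{H}\cup\mathcal{H}'}$ for every stage $r_k$ and for the limit simultaneously. With this choice all the groups $\text{Inf}_*(\mathcal{H}(r_k))$, $\text{Inf}_*(\mathcal{H}'(r_k))$, $\text{Inf}_*(\mathcal{H}(r_k)\cap\mathcal{H}'(r_k))$ and $\text{Inf}_*(\mathcal{H}(r_k)\cup\mathcal{H}'(r_k))$ are subcomplexes of the one chain complex $G((K_{\mathcal{H}\cup\mathcal{H}'})_*)$, and since $\mathcal{H}(r_k)_n\subseteq\mathcal{H}(r_{k+1})_n$ at every degree, the filtration maps restrict to literal inclusions between these infimum complexes. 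Proposition~\ref{p9.4.1} then supplies, at each stage, the short exact sequence $0\to\text{Inf}_*(\mathcal{H}(r_k))\cap\text{Inf}_*(\mathcal{H}'(r_k))\to\text{Inf}_*(\mathcal{H}(r_k))\oplus\text{Inf}_*(\mathcal{H}'(r_k))\to\text{Inf}_*(\mathcal{H}(r_k)\cup\mathcal{H}'(r_k))\to 0$; because its constituent maps are the diagonal inclusion and the difference of inclusions, and because the vertical filtration maps are themselves inclusions inside the common ambient complex, the squares relating stage $r_k$ to stage $r_{k+1}$ (and to the limit) commute on the nose. This exhibits the filtration as a morphism of short exact sequences of chain complexes.

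Finally I would pass to homology. The commutativity of the squares built from the first two arrows of each Mayer--Vietoris row is immediate from the functoriality of the embedded homology (Proposition~\ref{pers1-prop2}), since those arrows are induced by the evident inclusion morphisms of hypergraphs and the filtration maps commute with them. The only genuinely non-formal point is the commutativity of the squares containing the connecting homomorphism $H_n(\cdot\cup\cdot)\to H_{n-1}(\cdot\cap\cdot)$; this is precisely the naturality of the connecting homomorphism for a morphism of short exact sequences of chain complexes, a standard consequence of the snake/zig-zag lemma, and it applies once the chain-level morphism of short exact sequences has been set up in the previous step. Assembling these observations yields the asserted commutative diagram. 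The main obstacle, and the step that deserves the most care, is the one immediately before this: ensuring that fixing a single ambient complex genuinely turns the filtration into a strict morphism of short exact sequences of chain complexes, so that the abstract naturality statement can be invoked with no further bookkeeping.
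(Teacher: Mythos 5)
Your proposal is correct and follows essentially the same route as the paper: verify that the intersection hypothesis is inherited by each pair $(\mathcal{H}(r_k),\mathcal{H}'(r_k))$ (your argument here is the same as the paper's), apply Theorem~\ref{c9.3.3.1} to obtain each row, and use naturality of the induced maps to commute the squares. You are in fact somewhat more careful than the paper at the last step, which only cites Proposition~\ref{pers1-prop2}; you correctly isolate the squares containing the connecting homomorphism as the non-formal point and justify them by fixing the common ambient complex $K_{\mathcal{H}\cup\mathcal{H}'}$, realising the filtration as a morphism of the short exact sequences of Proposition~\ref{p9.4.1}, and invoking naturality of the zig-zag boundary map.
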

\begin{proof}
We first claim that for each $r>0$,  $\sigma\in \mathcal{H}(r)$ and  $\sigma'\in \mathcal{H}'(r)$ imply that either $\sigma\cap\sigma'$ is empty or $\sigma\cap\sigma'\in \mathcal{H}(r)\cap\mathcal{H}'(r)$. To prove the claim, we choose any $\sigma\in \mathcal{H}(r)$ and any $\sigma'\in \mathcal{H}'(r)$ such that $\sigma\cap\sigma'$ is non-empty.  Then $\sigma\cap\sigma'\in \mathcal{H}\cap\mathcal{H}'$. Moreover,  for any vertices $v,w\in \sigma\cap\sigma'$, we have $d(v,w)<r$.  Hence $\sigma\cap\sigma'\in K(\mathcal{H}\cup\mathcal{H}', r)$. Since 
\begin{eqnarray*}
\mathcal{H}(r)\cap\mathcal{H}'(r)&=&(K(\mathcal{H}, r)\cap \mathcal{H})\cap(K(\mathcal{H}', r)\cap\mathcal{H}')\\
&=&(K(\mathcal{H}\cup\mathcal{H}', r)\cap \mathcal{H})\cap(K(\mathcal{H}\cup\mathcal{H}', r)\cap\mathcal{H}')\\
&=&K(\mathcal{H}\cup\mathcal{H}', r)\cap (\mathcal{H}\cap\mathcal{H}'), 
\end{eqnarray*}
we see that $\sigma\cap\sigma'\in \mathcal{H}(r)\cap\mathcal{H}'(r)$. The claim is obtained.  By Proposition~\ref{c9.3.3.1}, each row of the above commutative diagram is a long exact sequence. With the help of the naturality property of the embedded homology given in Proposition~\ref{pers1-prop2}, the assertion follows. 
\end{proof}



\section
{Applications of the Associated Simplicial Complex  and the  Embedded Homology in Acyclic  Hypergraphs}\label{sec9.5}

Acyclic  Hypergraphs is an important family of hypergraphs.  A hypergraph $\mathcal{H}$ is said to be acyclic if $\mathcal{H}$ can be reduced to an empty set by repeatedly applying the following two operations:
\begin{quote}
(O1). if $v$ is a vertex that belongs to only one hyperedge, then delete $v$ from the hyperedge containing it; 

(O2). if $\sigma\subsetneq \sigma'$ are two hyperedges, then delete $\sigma$ from $\mathcal{H}$. 
\end{quote}
\noindent The notion of acyclic hypergraphs was firstly introduced as an analogue of trees in graphs (cf. \cite{berge}).  It is the mathematical model for  database schemas  in database theory (cf. \cite{acyclic1}).   In this section, we study acyclic hypergraphs by using the associated simplicial complexes and the embedded homology.


\subsection{The Associated Simplicial Complex  of Acyclic Hypergraphs}

In this subsection, we  strengthen \cite[Theorem~9]{parks}  and give a necessary condition and a sufficient condition on the associated simplicial complexes for the acyclic property of hypergraphs
, in Theorem~\ref{th9.5.2}. Then we  give some examples.

\smallskip

Characterising the associated simplicial complexes of acyclic hypergraphs, the following theorem is proved in \cite{parks}. 

\begin{theorem}\cite[Theorem~9]{parks}\label{th9.5.1}
Let $\mathcal{H}$ be a connected acyclic hypergraph. Then with integral coefficients, $H_n(K_\mathcal{H})$ is zero when $n\geq 1$ and is $\mathbb{Z}$ when $n=0$. 
\end{theorem}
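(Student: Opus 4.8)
The statement asserts that a connected acyclic hypergraph $\mathcal{H}$ has an associated simplicial complex $K_\mathcal{H}$ that is acyclic over $\mathbb{Z}$ (reduced homology vanishing, $H_0 = \mathbb{Z}$, $H_n = 0$ for $n \geq 1$). Since this is cited as \cite[Theorem~9]{parks}, the natural approach is an induction on the number of reduction steps needed to collapse $\mathcal{H}$ to the empty set via operations (O1) and (O2), tracking how each operation affects the associated simplicial complex $K_\mathcal{H}$ up to homotopy type. The plan is to show that each elementary operation preserves the homotopy type (or at least the homology) of $K_\mathcal{H}$, so that the homology of $K_\mathcal{H}$ equals that of the terminal object, which is contractible.

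First I would analyze operation (O2): if $\sigma \subsetneq \sigma'$ are both hyperedges, deleting $\sigma$ from $\mathcal{H}$ does not change $K_\mathcal{H}$ at all, because every subset of $\sigma$ is already a subset of $\sigma'$, hence already a simplex of $K_\mathcal{H}$. Thus (O2) leaves $K_\mathcal{H}$ literally unchanged, and the homology is trivially preserved. The substantive work is in operation (O1): removing a vertex $v$ that lies in exactly one hyperedge $\sigma$. Here the effect on $K_\mathcal{H}$ is more delicate, since deleting $v$ from $\sigma$ removes from $K_\mathcal{H}$ precisely those simplices $\tau$ with $v \in \tau \subseteq \sigma$ that are not subsets of any other hyperedge. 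Because $v$ belongs to \emph{only} $\sigma$, the simplices removed are exactly $\{\tau : v \in \tau \subseteq \sigma\}$, and I would argue that this removal is an elementary collapse (or a deformation retraction): the star of $v$ in $K_\mathcal{H}$ is a cone, and collapsing it across the free faces containing $v$ does not alter the homotopy type.

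The key step, therefore, is to verify that (O1) induces a simple-homotopy collapse of $K_\mathcal{H}$, so that $H_*(K_\mathcal{H})$ is invariant under the full reduction process. Combined with connectedness, which guarantees $K_\mathcal{H}$ is connected so $H_0 = \mathbb{Z}$, the induction terminates at the empty hypergraph whose associated complex is a point (or at the last single hyperedge, a contractible simplex), yielding vanishing of $H_n$ for $n \geq 1$. I expect the main obstacle to be the careful combinatorial bookkeeping in (O1): one must confirm that after deleting $v$ from $\sigma$, the vertex $v$ genuinely disappears from $V_\mathcal{H}$ only when $\sigma = \{v\}$, and otherwise handle the intermediate situation where $\sigma$ shrinks but $v$'s remaining simplices vanish. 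Making precise that the removed collection of simplices forms a collapsible cone on $v$—and that no simplex of the surviving complex is inadvertently destroyed—is where the argument requires genuine care rather than routine verification. A clean way to package this is to exhibit an explicit deformation retraction of the geometric realization, pushing the link of $v$ within $\sigma$ onto $\partial\sigma \setminus \{v\}$, rather than invoking collapse theory directly.
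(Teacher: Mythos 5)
Your argument is essentially the paper's own: the paper cites this statement to \cite{parks} without reproving it, but its proof of the stronger Theorem~\ref{th9.5.2}~(a), via Lemmas~\ref{l9.5.5} and~\ref{l9.5.4}, follows exactly your route --- (O2) leaves $K_\mathcal{H}$ literally unchanged, and removing a vertex $v$ lying in only one hyperedge $\sigma$ deletes precisely the open star of $v$, whose closed star is the contractible simplex $\Delta[\sigma]$, so the operation is a deformation retraction (the paper packages this as the modified operation (O1)', restricted to hyperedges with at least two vertices). The one point you should make fully explicit, and which you only partly flag, is the endgame: the induction must terminate at the stage of finitely many discrete points (a single point, by connectedness) rather than at the empty set, since deleting a vertex from a singleton hyperedge destroys a component and changes $H_0$ --- this is exactly what Lemma~\ref{l9.5.5} is for.
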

\begin{remark}
For a general acyclic hypergraph $\mathcal{H}$, it follows from Theorem~\ref{th9.5.1} that $H_n(K_\mathcal{H})$ is zero when $n\geq 1$ and is $\mathbb{Z}^{\oplus k}$ when $n=0$ where $k$ is the number of connected components of $K_\mathcal{H}$.  
\end{remark}

To characterise acyclic hypergraphs more precisely,  we strengthen  Theorem~\ref{th9.5.1}  and prove the next theorem.

\begin{theorem}\label{th9.5.2}
Let $\mathcal{H}$ be a hypergraph. 

(a). If $\mathcal{H}$ is acyclic, then $K_\mathcal{H}$ is acyclic as well,  and $K_\mathcal{H}$  has the homotopy type of  finite discrete points.  

(b). If  $K_\mathcal{H}$ is the associated simplicial complex of a finite disjoint union of simplices whose sets of vertices are mutually non-intersecting, then $\mathcal{H}$ is acyclic. 
\end{theorem}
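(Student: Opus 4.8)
The plan is to prove both directions by tracking how the two reduction operations (O1) and (O2) interact with the passage $\mathcal{H}\mapsto K_\mathcal{H}$, which lets me lift a reduction of $\mathcal{H}$ to one of $K_\mathcal{H}$ and simultaneously control the homotopy type. This strengthens Theorem~\ref{th9.5.1}: the homology computation there will be recovered, but the new content is the homotopy type, which I extract from collapses rather than from homology alone.

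For part (a), the starting observation is that the two operations behave very differently on the associated complex. Applying (O2) to delete a hyperedge $\sigma\subsetneq\sigma'$ leaves $K_\mathcal{H}$ \emph{literally unchanged}, since every subset of $\sigma$ is already a face of $\sigma'$. Applying (O1) to a vertex $v$ lying in a unique hyperedge $\sigma$ replaces $\sigma$ by $\sigma\setminus\{v\}$; here I would first check that in $K_\mathcal{H}$ every simplex containing $v$ is a subset of $\sigma$ (any such simplex lies in some hyperedge, which must be the unique hyperedge through $v$), so that $K_{\mathcal{H}'}$ equals the vertex deletion $K_\mathcal{H}\setminus v$. This yields two things at once. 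Topologically, the closed star of $v$ is the full simplex on $\sigma$ and its link is the full simplex on $\sigma\setminus\{v\}$, so when $|\sigma|\geq 2$ the complex collapses onto $K_\mathcal{H}\setminus v$ (a simplex collapses onto any of its faces), a deformation retract; when $|\sigma|=1$ the point $v$ is isolated and its deletion removes one contractible component. Combinatorially, I can realise the very same vertex deletion purely by (O1)/(O2): first use (O2) to delete every proper subset of $\sigma$ containing $v$ (each is a proper face of the still-present $\sigma$), after which $v$ sits in the single remaining hyperedge $\sigma$, and then apply (O1).

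I would then run these two packages along a fixed reduction $\mathcal{H}=\mathcal{H}^{(0)}\to\cdots\to\mathcal{H}^{(m)}=\emptyset$. Passing to associated complexes gives a sequence ending at $K_{\emptyset}=\emptyset$; by the combinatorial package each step is realised by (O1)/(O2) operations on the current hypergraph, proving $K_\mathcal{H}$ is acyclic, while by the topological package each step either preserves homotopy type or strips off one contractible component, so a reverse induction from $\emptyset$ shows $K_\mathcal{H}$ is homotopy equivalent to a finite set of points. The main obstacle is exactly this (O1) analysis: one must argue carefully that the vertex removed from $\mathcal{H}$ becomes a vertex lying in a \emph{unique maximal} simplex of $K_\mathcal{H}$, handle the degenerate case $|\sigma|=1$ that changes the number of components, and keep the combinatorial reduction and the topological collapse bookkeeping in lock-step.

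For part (b), the hypothesis forces $K_\mathcal{H}=\coprod_{i=1}^{k}\Delta[n_i]$ on pairwise disjoint vertex sets $V_1,\dots,V_k$. I would first note that each full simplex on $V_i$ can only arise from having $V_i$ itself as a hyperedge, since every hyperedge is contained in some $V_i$ and the $V_i$ are disjoint; hence each $V_i\in\mathcal{H}$ and every other hyperedge is a proper subset of exactly one $V_i$. The reduction is then immediate: apply (O2) to delete all non-maximal hyperedges, leaving the disjoint maximal faces $\{V_1,\dots,V_k\}$, and then, because the $V_i$ are vertex-disjoint, repeatedly apply (O1) to peel vertices off each $V_i$ one at a time until every hyperedge is exhausted, reducing $\mathcal{H}$ to $\emptyset$. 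This shows $\mathcal{H}$ is acyclic and requires no further machinery.
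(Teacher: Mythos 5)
Your proof is correct and takes essentially the same route as the paper's: the key observations --- that (O2) leaves $K_\mathcal{H}$ unchanged, that (O1) induces a vertex deletion of $K_\mathcal{H}$ which is either a deformation retract or the removal of an isolated contractible component, and that for (b) each maximal simplex $V_i$ must itself be a hyperedge so that (O2) followed by (O1) reduces $\mathcal{H}$ to the empty set --- are exactly the content of the paper's Lemmas~\ref{l9.5.5} and~\ref{l9.5.4} and of its two-step proof of part (b). The only cosmetic difference is that for the acyclicity of $K_\mathcal{H}$ the paper first strips $K_\mathcal{H}$ down to $\mathcal{H}$ by applying (O2) to all added faces at once and then reuses the given reduction of $\mathcal{H}$, rather than interleaving those (O2) deletions with the reduction of $\mathcal{H}$ step by step as you do.
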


The following corollary is a consequence of Theorem~\ref{th9.5.2}~(a).

\begin{corollary}\label{co9.5.1.1}
Let $\mathcal{H}$ be an acyclic hypergraph. If $n\geq 1$ is the dimension of $\mathcal{H}$, then $H_n(\mathcal{H})=0$. 
\end{corollary}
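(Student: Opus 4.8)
The plan is to combine Theorem~\ref{th9.5.2}~(a) with Proposition~\ref{pr9.3.top}, which computes the top-dimensional embedded homology of a hypergraph. Let $\mathcal{H}$ be acyclic of dimension $n\geq 1$. By Proposition~\ref{pr9.3.top}, the top embedded homology equals the top simplicial homology of the associated simplicial complex, that is, $H_n(\mathcal{H})=H_n(K_\mathcal{H})$. So the entire problem reduces to computing $H_n(K_\mathcal{H})$.

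Next I would invoke Theorem~\ref{th9.5.2}~(a): since $\mathcal{H}$ is acyclic, $K_\mathcal{H}$ is acyclic and has the homotopy type of finitely many discrete points. A disjoint union of points has trivial reduced homology in every positive degree; concretely, $H_m(K_\mathcal{H})=0$ for all $m\geq 1$ and $H_0(K_\mathcal{H})=\mathbb{Z}^{\oplus k}$ where $k$ is the number of components. (Equivalently, one can cite Theorem~\ref{th9.5.1} together with its following remark, which states directly that $H_m(K_\mathcal{H})=0$ for $m\geq 1$.) In particular, since $n\geq 1$, we get $H_n(K_\mathcal{H})=0$.

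Combining the two displays yields
\begin{eqnarray*}
H_n(\mathcal{H})=H_n(K_\mathcal{H})=0,
\end{eqnarray*}
which is the assertion of the corollary.

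I do not anticipate a genuine obstacle here, since the statement is labelled a corollary and the two ingredients are already in hand. The only point requiring a moment of care is the dimension hypothesis: Proposition~\ref{pr9.3.top} identifies $H_n(\mathcal{H})$ with $H_n(K_\mathcal{H})$ precisely when $n$ is the dimension of $\mathcal{H}$ (so that $\mathcal{H}_n=(K_\mathcal{H})_n$ and no higher hyperedges interfere), and the vanishing of $H_n(K_\mathcal{H})$ uses $n\geq 1$. Both of these hypotheses are exactly the ones supplied, so the argument is immediate; the main intellectual content has already been absorbed into Theorem~\ref{th9.5.2}~(a) and Proposition~\ref{pr9.3.top}.
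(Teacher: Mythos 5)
Your argument is correct and is essentially identical to the paper's own proof: both apply Proposition~\ref{pr9.3.top} to reduce to $H_n(K_\mathcal{H})$ and then invoke Theorem~\ref{th9.5.2}~(a) for the vanishing in degree $n\geq 1$. Your additional remark about the dimension hypothesis and the alternative citation of Theorem~\ref{th9.5.1} is accurate but not needed.
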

\begin{proof}
By Proposition~\ref{pr9.3.top},  we have $H_n(\mathcal{H})=H_n(K_\mathcal{H})$. Since $n\geq 1$, by Theorem~\ref{th9.5.2}~(a), we have $H_n(K_\mathcal{H})=0$. The assertion follows. 
\end{proof}

The following corollary is a particular case of Theorem~\ref{th9.5.2}~(b).

\begin{corollary}
Let $\mathcal{H}$ be a hypergraph such that $K_\mathcal{H}$ is the associated simplicial complex of a simplex. Then $\mathcal{H}$ is acyclic. 
\end{corollary}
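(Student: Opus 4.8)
The plan is to understand how the two reduction moves O1 and O2 act on the associated simplicial complex $K_\mathcal{H}$. First I would record two structural facts. Move O2, which deletes a hyperedge $\sigma\subsetneq\sigma'$, leaves $K_\mathcal{H}$ unchanged, since $K_\mathcal{H}$ is the downward closure of $\mathcal{H}$ and hence depends only on the maximal hyperedges; consequently exhaustive application of O2 reduces any hypergraph to the antichain $\mathcal{M}$ of its maximal hyperedges, and $\mathcal{M}$ is the same for $\mathcal{H}$ and for $K_\mathcal{H}$, because the maximal hyperedges of $\mathcal{H}$ are exactly the facets of $K_\mathcal{H}$. Move O1, which deletes a vertex $v$ lying in a unique hyperedge $\sigma$, removes from $K_\mathcal{H}$ precisely the open star of $v$; because $v$ lies in $\sigma$ only, the link of $v$ in $K_\mathcal{H}$ is the full simplex on $\sigma\setminus\{v\}$, so this removal is a simplicial collapse, hence a homotopy equivalence. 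The one exception is $\sigma=\{v\}$, where O1 instead deletes the isolated (contractible) vertex $v$ as an entire connected component.

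For part (a), the acyclicity of $K_\mathcal{H}$ would follow from the first fact: since $\mathcal{H}$ and $K_\mathcal{H}$ have the same family $\mathcal{M}$ of maximal hyperedges and O2 strips any hypergraph down to $\mathcal{M}$, whether a hypergraph reduces to the empty set depends only on $\mathcal{M}$; as $\mathcal{H}$ is acyclic, so is $K_\mathcal{H}$. For the homotopy statement, I would fix a reduction $\mathcal{H}=\mathcal{H}^{(0)}\to\cdots\to\mathcal{H}^{(T)}=\emptyset$ and track $K^{(t)}=K_{\mathcal{H}^{(t)}}$. By the two facts, each transition is either the identity (O2), a collapse (O1 on a non-singleton hyperedge), or the deletion of one isolated point (O1 on a singleton). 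Identities and collapses preserve both the homotopy type and the number of connected components, while each isolated-point deletion removes one contractible component. Running the induction backward from $K^{(T)}=\emptyset$ then shows $K_\mathcal{H}$ is homotopy equivalent to $N$ points, where $N$ is the number of isolated-point deletions; since the component count drops from $k$ to $0$ only at those deletions, $N=k$ equals the number of connected components of $K_\mathcal{H}$. Thus $K_\mathcal{H}$ has the homotopy type of $k$ discrete points, which strengthens Theorem~\ref{th9.5.1}.

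For part (b), the hypothesis says $K_\mathcal{H}=\coprod_i\Delta[\sigma_i]$ is a disjoint union of full simplices on mutually disjoint vertex sets, so its facets, and hence the maximal hyperedges of $\mathcal{H}$, are exactly the $\sigma_i$. I would then exhibit an explicit reduction of $\mathcal{H}$: first apply O2 repeatedly to delete every non-maximal hyperedge, leaving $\{\sigma_1,\dots,\sigma_m\}$; since the $\sigma_i$ have disjoint vertex sets, each is its own connected component in which every vertex lies in a single hyperedge, so O1 peels off the vertices of each $\sigma_i$ one at a time until it is emptied. This reduces $\mathcal{H}$ to the empty set, so $\mathcal{H}$ is acyclic.

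The main obstacle I anticipate is the careful verification underlying the two structural facts: precisely, that deleting the open star of a vertex whose link is a full simplex is indeed a sequence of elementary simplicial collapses and therefore a homotopy equivalence, and that acyclicity of a hypergraph genuinely depends only on its antichain of maximal hyperedges, i.e.\ that the order in which O1 and O2 are applied does not affect reducibility to the empty set (the confluence of the reduction). The remaining bookkeeping, that connected components are preserved by collapses and removed one at a time by isolated-point deletions, is routine once these two points are secured.
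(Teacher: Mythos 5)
Your argument for this corollary (the single-simplex case of part (b)) is correct and matches the paper's proof: the unique facet $\Delta^n$ of $K_\mathcal{H}$ must itself be a hyperedge of $\mathcal{H}$ containing every other hyperedge, so (O2) strips $\mathcal{H}$ down to $\{\Delta^n\}$ and (O1) then peels off the vertices one at a time. The additional machinery you develop (collapses, confluence of the reduction) is aimed at the full theorem and is not needed here.
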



In order to prove Theorem~\ref{th9.5.2}, we consider the following operation
\begin{quote}
(O1)'. if $v$ is a vertex that belongs to only one hyperedge consisting of at least two vertices, then delete $v$ from the hyperedge containing it. 
\end{quote}
We  prove the next  lemmas. 

\begin{lemma}\label{l9.5.5}
 A hypergraph $\mathcal{H}$ is acyclic if and only if $\mathcal{H}$ can be reduced to   finite discrete points by finite steps of the operations (O1)' and (O2). 
\end{lemma}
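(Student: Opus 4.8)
The plan is to prove the two implications separately, where ``finite discrete points'' means a hypergraph all of whose hyperedges are singletons. The direction ``reducible to discrete points by (O1)' and (O2) $\Rightarrow$ acyclic'' is immediate: since (O1)' is merely (O1) subject to the extra requirement that the hyperedge have at least two vertices, any reduction of $\mathcal{H}$ to a discrete set $P=\{\{v_1\},\dots,\{v_k\}\}$ by (O1)' and (O2) is already a legitimate reduction by (O1) and (O2). From $P$ I would then apply (O1) to each $\{v_i\}$ in turn: each $v_i$ belongs to the unique hyperedge $\{v_i\}$, so (O1) deletes it and removes that hyperedge, reducing $P$ to the empty set after $k$ steps. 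Hence $\mathcal{H}$ reduces to the empty set and is acyclic.

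For the converse I would argue by induction on the length $N$ of a reduction of $\mathcal{H}$ to the empty set by (O1) and (O2), the case $N=0$ being trivial. Inspecting the first operation, if it is an (O2) step or an (O1) step applied to a hyperedge of size at least two---hence an (O1)' step---then the resulting hypergraph admits a reduction of length $N-1$, the inductive hypothesis supplies it with a reduction to discrete points by (O1)' and (O2), and prepending the first step settles this case. The only remaining possibility is that the first operation is (O1) applied to a singleton $\{v\}$; since (O1) requires $v$ to lie in a unique hyperedge, $v$ is then \emph{isolated}, meaning $\{v\}$ is its sole hyperedge, and the reduced hypergraph is $\mathcal{H}\setminus\{\{v\}\}$.

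The crux is therefore the following inertness statement, which I would isolate as a sublemma: if $v$ is isolated in $\mathcal{H}$ and $\mathcal{H}\setminus\{\{v\}\}$ reduces to a hypergraph $P$ by (O1)' and (O2), then applying the very same sequence to $\mathcal{H}$ reduces it to $P\cup\{\{v\}\}$. This should follow because the singleton $\{v\}$ has size one and so is never touched by (O1)'; because $v$ lies in no other hyperedge, $\{v\}$ is never a proper subset of another hyperedge and so is never touched by (O2); and deleting $v$ from the vertex set changes no membership count for any $u\neq v$, so the applicability of every (O1)' and (O2) move on the remaining hyperedges is unaffected. Granting the sublemma, in the remaining case the inductive hypothesis reduces $\mathcal{H}\setminus\{\{v\}\}$ to discrete points $P$ (with $v$ not a vertex of $P$, since $v$ was removed), and the sublemma lifts this to a reduction of $\mathcal{H}$ to $P\cup\{\{v\}\}$, again finite discrete points. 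I expect the main obstacle to be the careful verification of this sublemma---in particular checking that carrying an isolated singleton along neither enables nor blocks any later (O1)' or (O2) move, and that the terminal hypergraph of the resulting (O1)',(O2)-reduction indeed contains no hyperedge of size at least two.
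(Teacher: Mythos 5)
Your proof is correct. The paper itself offers no argument for this lemma beyond the phrase ``the assertion follows by a simple observation,'' so there is no competing approach to compare against; your induction on the length of the $(O1),(O2)$-reduction, together with the inertness sublemma that an isolated singleton $\{v\}$ is never touched by $(O1)'$ or $(O2)$ and never affects the applicability of moves on the other hyperedges, is exactly the observation the authors leave implicit, and both directions (including the easy one, that an $(O1)'$ step is in particular an $(O1)$ step and that discrete points reduce to the empty set) check out.
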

\begin{proof}
The assertion follows by a simple observation. 
\end{proof}

\begin{lemma}\label{l9.5.4}
Given a hypergraph $\mathcal{H}$, let $\mathcal{H}'$ be a hypergraph obtained from $\mathcal{H}$ by arbitrary finite steps of the operations (O1)' and (O2). Then $K_\mathcal{H}$ and $K_{\mathcal{H}'}$ are homotopy equivalent. 
\end{lemma}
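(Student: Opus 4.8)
The plan is to reduce the statement to a single application of one of the two operations and then analyse each operation's effect on the associated simplicial complex. Since homotopy equivalence is transitive and $\mathcal{H}'$ is obtained from $\mathcal{H}$ by finitely many steps, it suffices to treat the case in which $\mathcal{H}'$ arises from $\mathcal{H}$ by exactly one application of (O1)' or of (O2); the general case then follows by induction on the number of steps.

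For (O2), suppose $\mathcal{H}'=\mathcal{H}\setminus\{\sigma\}$ with $\sigma\subsetneq\sigma'$ for some $\sigma'\in\mathcal{H}$. Here I would prove the stronger statement $K_\mathcal{H}=K_{\mathcal{H}'}$. Indeed, by the description of the associated simplicial complex as the collection of all non-empty subsets of hyperedges, every simplex contributed by $\sigma$ is a non-empty subset of $\sigma$, hence a non-empty subset of $\sigma'$; since $\sigma'$ remains a hyperedge of $\mathcal{H}'$, deleting $\sigma$ removes no simplices from the associated complex. Thus the two complexes are literally equal and the homotopy equivalence is immediate.

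The substantive case is (O1)'. Suppose $v$ lies in a unique hyperedge $\sigma$ with $|\sigma|\geq 2$, and $\mathcal{H}'=(\mathcal{H}\setminus\{\sigma\})\cup\{\tau\}$ where $\tau=\sigma\setminus\{v\}$. First I would establish the combinatorial identity
\begin{eqnarray*}
K_{\mathcal{H}'}=\{\rho\in K_\mathcal{H}\mid v\notin\rho\},
\end{eqnarray*}
which rests on the observation that, because $v$ belongs only to $\sigma$, every simplex of $K_\mathcal{H}$ containing $v$ is a non-empty subset of $\sigma$ (so that $\sigma$ is in fact a maximal simplex of $K_\mathcal{H}$), while the hyperedges of $\mathcal{H}'$ contribute precisely the subsets of $\sigma$ missing $v$ together with all simplices of $K_\mathcal{H}$ that never involved $v$. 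Granting this, the closed star of $v$ in $K_\mathcal{H}$ is the full simplex $\Delta[\sigma]$, its link is $\Delta[\tau]$, and one has the decomposition $K_\mathcal{H}=K_{\mathcal{H}'}\cup\Delta[\sigma]$ with $K_{\mathcal{H}'}\cap\Delta[\sigma]=\Delta[\tau]$.

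Finally I would produce the homotopy equivalence for (O1)' by an explicit deformation retraction. Writing $\Delta[\sigma]=v\ast\Delta[\tau]$ as the cone with apex $v$, the straight-line retraction that pushes $v$ into $\tau$ deformation retracts $\Delta[\sigma]$ onto its face $\Delta[\tau]$ while fixing $\Delta[\tau]$ pointwise. Since this retraction fixes the overlap $K_{\mathcal{H}'}\cap\Delta[\sigma]=\Delta[\tau]$, I can extend it by the identity on $K_{\mathcal{H}'}$ to obtain a well-defined deformation retraction of $K_\mathcal{H}$ onto $K_{\mathcal{H}'}$; hence $K_\mathcal{H}$ and $K_{\mathcal{H}'}$ are homotopy equivalent. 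The main obstacle, and the only place requiring genuine care, is the combinatorial identity in the (O1)' case: one must use the hypothesis that $v$ lies in a single hyperedge to guarantee that deleting $v$ removes exactly the open star of $v$ (no more and no less), since otherwise the cone-collapse argument would not glue correctly.
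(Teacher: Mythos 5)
Your proposal is correct and follows the same route as the paper: the paper's proof simply asserts that (O2) leaves $K_\mathcal{H}$ unchanged and that (O1)' induces a deformation retraction of $K_\mathcal{H}$, which are exactly the two facts you establish. You have merely supplied the details (the reduction to a single step, the identity $K_{\mathcal{H}'}=\{\rho\in K_\mathcal{H}\mid v\notin\rho\}$, and the cone-collapse of $\Delta[\sigma]$ onto $\Delta[\tau]$) that the paper leaves as a ``geometric observation.''
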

\begin{proof}
The assertion follows from a geometric observation that  the operation (O2) on  $\mathcal{H}$ does not  change $K_\mathcal{H}$.  And the operation (O1)' on  $\mathcal{H}$ gives a deformation retract of $K_{\mathcal{H}}$,  hence it does not  change the homotopy type of $K_\mathcal{H}$. 
\end{proof}

Now we turn to prove Theorem~\ref{th9.5.2}.

\begin{proof}[Proof of Theorem~\ref{th9.5.2}~(a)]
Suppose  $\mathcal{H}$ is acyclic. Then $\mathcal{H}$ can be reduced to the empty set after finite steps of the operations (O1) and (O2). Since $K_\mathcal{H}$ is obtained by adding all the  non-empty  subsets  $\tau\subsetneq\sigma$ for all $\sigma\in \mathcal{H}$ as hyperedges,   we see that after finite steps of (O2), $K_\mathcal{H}$ can be reduced to $\mathcal{H}$. Hence $K_\mathcal{H}$ can be reduced to  the empty set after finite steps of the operations (O1) and (O2).  This implies $K_\mathcal{H}$ is acyclic.  Moreover, it follows from Lemma~\ref{l9.5.5} and Lemma~\ref{l9.5.4} that $K_\mathcal{H}$ has the homotopy type of  finite discrete points. 
\end{proof}

\begin{proof}[Proof of Theorem~\ref{th9.5.2}~(b)]
We divide the proof into two steps.

{\sc Step~1. }
We assume that  $K_\mathcal{H}$ is the associated simplicial complex of an  $n$-simplex.  Then there exists exactly one $n$-simplex in $K_\mathcal{H}$, denoted as   $\Delta^n$.  Since $K_\mathcal{H}$ is the smallest simplicial complex containing $\mathcal{H}$, we see that $\Delta^n$ is the exactly one hyperedge in $\mathcal{H}_n$. Since $\Delta^n$ consists of all the vertices of $K_\mathcal{H}$, it consists of all the vertices of $\mathcal{H}$. Hence for any hyperedge $\tau\in \mathcal{H}$, we have $\tau\subseteq \Delta^n$.  By applying (O2) repeatedly, all the hyperedges $\tau\subsetneq \Delta^n$ of $\mathcal{H}$ can be deleted and $\mathcal{H}$ can be reduced to  the hypergraph consisting of only one  hyperedge $\Delta^n$.  Thus $\mathcal{H}$ is acyclic. 

{\sc Step~2. }
We assume that $K_\mathcal{H}$ is the associated simplicial complex of a disjoint union of  simplices $\coprod_{j=1}^m \Delta^{n_j}$, where  for any distinct $i$ and $j$, their  sets of vertices  $V_{\Delta^{n_i}}$ and  $V_{\Delta^{n_j}}$ are non-intersecting.  Then following the notation of Example~\ref{ex9.1.1} and with the help of  Proposition~\ref{pr9.3.1.1}, 
\begin{eqnarray*}
K_\mathcal{H}=\coprod_{j=1}^m  \Delta[{n_j}]. 
\end{eqnarray*}
For each $j=1,\cdots,m$, we let 
\begin{eqnarray*}
\mathcal{H}(j)=\mathcal{H}\cap  \Delta[{n_j}]. 
\end{eqnarray*}
Then $\Delta[n_j]$ is the associated simplicial complex of $\mathcal{H}(j)$.  By {\sc Step~1}, we see that $\mathcal{H}(j)$ is acyclic. Since  
$\mathcal{H}$ is the disjoint union of  $\mathcal{H}(j)$'s, we see that $\mathcal{H}$ 
 is acyclic as well.  
\end{proof}

The converse of Theorem~\ref{th9.5.2}~(a) is not true. We give such examples as follows. 
\begin{example}\label{pr9.5.8}
Let $n\geq 3$.  
Then following the notations in Example~\ref{ex9.1.1}, we have that for any $0\leq i\leq n$, the hypergraph
\begin{eqnarray}\label{eq9.5.9}
\mathcal{H}=\{\Delta^{n-1}_j\mid 0\leq j\leq n, j\neq i\}
\end{eqnarray}
 is not acyclic, while $K_\mathcal{H}$ has the homotopy type of a single point. 
 \end{example} 
 
The hypergraph $\mathcal{H}$ in Example~\ref{pr9.5.8} is the collection of $(n-1)$-faces of $\Delta^n$ excluding the $i$-th $(n-1)$-face. The following picture shows the case $n=3$, $i=3$ in Example~\ref{pr9.5.8}. 
 \begin{center}
\begin{tikzpicture}
\coordinate [label=left:$v_0$]    (A) at (2,0); 
 \coordinate [label=right:$v_1$]   (B) at (6,0); 
 \coordinate  [label=right:$v_2$]   (C) at (4,3); 
\coordinate  [label=right:$v_3$]   (D) at (4,1); 
 \coordinate[label=left:$\mathcal{H}$:] (F) at (1.5,2);
 \draw [dotted] (A) -- (B);
 \draw [dotted] (B) -- (C);
 \draw [dotted] (C) -- (A);
  \draw [dotted] (D) -- (A);
\draw [dotted] (D) -- (B);
\draw [dotted] (D) -- (C);
\fill [fill opacity=0.1][gray!100!white] (A) -- (B) -- (D) -- cycle;
\fill [fill opacity=0.1][gray!100!white] (C) -- (B) -- (D) -- cycle;
\fill [fill opacity=0.1][gray!100!white] (A) -- (C) -- (D) -- cycle;
\coordinate [label=left:$v_0$]    (X) at (10,0); 
\coordinate [label=right:$v_1$]   (Y) at (14,0); 
\coordinate  [label=right:$v_2$]   (Z) at (12,3); 
\coordinate  [label=right:$v_3$]   (U) at (12,1); 
\coordinate[label=left:$K_\mathcal{H}$:] (F) at (9.5,2);
\draw [line width=1.5pt] (X) -- (Y);
\draw[line width=1.5pt] (Y) -- (Z);
\draw [line width=1.5pt] (Z) -- (X);
\draw [line width=1.5pt] (U) -- (X);
 \draw [line width=1.5pt] (U) -- (Y);
\draw [line width=1.5pt] (U) -- (Z);
\fill [fill opacity=0.1] [gray!100!white] (Y) -- (Z) -- (U) -- cycle;
\fill  [fill opacity=0.1][gray!100!white] (Z) -- (X) -- (U) -- cycle;
\fill  [fill opacity=0.1][gray!100!white] (Y) -- (X) -- (U) -- cycle;
 \fill (10,0) circle (2.5pt) (14,0) circle (2.5pt)  (12,3) circle (2.5pt)     (12,1)  circle   (2.5pt) ;    
 \end{tikzpicture}
\end{center}
\begin{proof}[Proof of Example~\ref{pr9.5.8}] 
Given a fixed $i$, by (\ref{eq9.5.9}), we see that $v_i$ belongs to $n$ hyperedges of $\mathcal{H}$ and for any $j\neq i$,  $v_j$ belongs to $n-1$ hyperedges of $\mathcal{H}$.  Since $n\geq 3$, each vertex of $\mathcal{H}$ belongs to at least two hyperedges.  Hence $\mathcal{H}$ cannot be reduced by (O1).  On the other hand, it is clear that for any distinct $j$ and  $l$,  $\Delta^{n-1}_j$ is not contained in $\Delta^{n-1}_l$. Hence $\mathcal{H}$ cannot be reduced by (O2).  Therefore, $\mathcal{H}$ cannot be reduced  by either (O1) or (O2).  Hence $\mathcal{H}$ is not acyclic. 
\end{proof}

The converse of Theorem~\ref{th9.5.2}~(b) is also not true.  The following is such an example.  
 
\begin{example}
Let $\mathcal{H}=\{\{v_0,v_1,v_2\},\{v_1,v_2,v_3\}\}$. Then $\mathcal{H}$ is acyclic, while 
\begin{eqnarray*}
K_\mathcal{H}=\{\{v_0,v_1,v_2\},\{v_1,v_2,v_3\}, \{v_0,v_1\},\{v_1,v_2\},\{v_0,v_2\},\{v_1,v_3\},\{v_2,v_3\},\{v_0\},\{v_1\},\{v_2\},\{v_3\}\}
\end{eqnarray*}
is connected and is not the associated simplicial complex of any single simplex. 
\end{example} 

\subsection{Hypergraphs Whose Associated Simplicial Complex Is $\Delta[n]$}\label{subsec9.5.2}

A particular family of acyclic hypergraphs is the hypergraphs whose associated simplicial complexes are $\Delta[n]$. In this section, we study the embedded homology of this family of acyclic hypergraphs.

\smallskip

Let $\mathcal{H}$ be a hypergraph whose associated simplicial complex is $\Delta[n]$, $n\geq 2$.  Then $\mathcal{H}$ is acyclic. Besides the triviality of $H_n(\mathcal{H})$  given  in Corollary~\ref{co9.5.1.1},  the following proposition shows the triviality of $H_{n-1}(\mathcal{H})$. 

\begin{proposition}\label{ex9.5.2.1}
Let $\mathcal{H}$ be a   hypergraph of dimension $n$ such that the associated simplicial complex of $\mathcal{H}$ is $\Delta[n]$, $n\geq 2$. Then   $H_{n-1}(\mathcal{H})$ is zero. 
\end{proposition}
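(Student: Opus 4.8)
The plan is to reduce the whole statement to one explicit computation via the formula for the embedded homology in Proposition~\ref{p9.3.2.1}, choosing the ambient simplicial complex to be $K_\mathcal{H}=\Delta[n]$ itself. First I would pin down the top hyperedges. Since $\Delta[n]$ is the full simplicial complex on the $n+1$ vertices $v_0,\cdots,v_n$ and $K_\mathcal{H}$ is by construction the smallest simplicial complex containing $\mathcal{H}$, the top simplex $\sigma=\{v_0,\cdots,v_n\}$ must be a subset of some hyperedge of $\mathcal{H}$; as $\sigma$ already exhausts the vertex set, this forces $\sigma\in\mathcal{H}$, and it is the unique $n$-hyperedge because $\mathcal{H}$ has dimension $n$. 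Hence $G(\mathcal{H}_n)=G\sigma$ and $\partial_n(G(\mathcal{H}_n))=G\cdot\partial_n\sigma$, where $\partial_n\sigma=\sum_{i=0}^n(-1)^i\Delta^{n-1}_i$ in the notation of Example~\ref{ex9.1.1}.

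Next I would apply Proposition~\ref{p9.3.2.1} with $K=\Delta[n]$ and its boundary map $\partial_*$, which gives
\[
H_{n-1}(\mathcal{H})\cong \text{Ker}(\partial_{n-1}|_{G(\mathcal{H}_{n-1})})\big/\big(G(\mathcal{H}_{n-1})\cap\partial_n(G(\mathcal{H}_n))\big).
\]
The inclusion of the denominator into the numerator is automatic from $\partial_{n-1}\partial_n=0$, so the entire task becomes the reverse inclusion: every cycle $x\in G(\mathcal{H}_{n-1})$ with $\partial_{n-1}x=0$ already lies in $\partial_n(G(\mathcal{H}_n))=G\cdot\partial_n\sigma$.

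The key step is to view such an $x$ inside the full chain complex of $\Delta[n]$. Because $\mathcal{H}_{n-1}\subseteq(\Delta[n])_{n-1}$, the element $x$ is an $(n-1)$-chain of the contractible complex $\Delta[n]$, and $\partial_{n-1}x=0$ makes it a cycle there. Since $n\geq 2$ gives $n-1\geq 1$, contractibility of $\Delta[n]$ yields $H_{n-1}(\Delta[n])=0$ with any coefficient group $G$ (the standard contracting chain homotopy on a simplex is defined over $\mathbb{Z}$ and tensors to $G$). Therefore $\text{Ker}(\partial_{n-1})$ inside $G((\Delta[n])_{n-1})$ equals $\text{Im}(\partial_n)=G\cdot\partial_n\sigma$, so $x=g\,\partial_n\sigma$ for some $g\in G$. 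But then $x\in\partial_n(G\sigma)=\partial_n(G(\mathcal{H}_n))$, and since $x\in G(\mathcal{H}_{n-1})$ by hypothesis, $x$ lies in the intersection forming the denominator. This establishes the reverse inclusion and hence $H_{n-1}(\mathcal{H})=0$.

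The proof is largely mechanical, and the only conceptual point — the closest thing to an obstacle — is recognizing that the arbitrariness of $\mathcal{H}_{n-1}$ (it may be any subset of the $n+1$ faces $\Delta^{n-1}_i$) is harmless: the cycle condition together with contractibility of the ambient $\Delta[n]$ forces any $(n-1)$-cycle supported on $\mathcal{H}_{n-1}$ to be a scalar multiple of the full boundary $\partial_n\sigma$, which is precisely the image we quotient by. The hypothesis $n\geq 2$ is essential and enters exactly through $H_{n-1}(\Delta[n])=0$; for $n=1$ the corresponding assertion would concern $H_0$ and is false.
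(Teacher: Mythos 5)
Your proof is correct and follows essentially the same route as the paper's: both identify $\mathcal{H}_n=\{\Delta^n\}$, use the explicit formula $H_{n-1}(\mathcal{H})\cong\text{Ker}(\partial_{n-1}|_{G(\mathcal{H}_{n-1})})/(G(\mathcal{H}_{n-1})\cap\partial_n(G(\mathcal{H}_n)))$, and rest on the key fact that the $(n-1)$-cycles of $\Delta[n]$ are exactly the multiples of $\partial_n\Delta^n$ (the paper's equation for $\text{Ker}\,\partial_{n-1}$, your appeal to $H_{n-1}(\Delta[n];G)=0$). The only difference is cosmetic: the paper splits into the cases $\mathcal{H}_{n-1}\subsetneq\{\Delta^{n-1}_j\}$ and $\mathcal{H}_{n-1}=\{\Delta^{n-1}_j\}$, whereas you observe that any cycle supported on $\mathcal{H}_{n-1}$ already lies in the denominator, which handles both cases at once.
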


\begin{proof}
For simplicity, we take integral coefficients. Let $\partial_k$, $k=0,1,\cdots,n$, be the boundary maps of $\Delta[n]$. Then  for any $k=0,1,\cdots,n$, 
\begin{eqnarray}\label{eq9.5.2.1}
H_k(\mathcal{H})=\text{Ker}(\partial_k|_{\mathbb{Z}(\mathcal{H}_k)})/(\mathbb{Z}(\mathcal{H}_k)\cap \partial_{k+1} \mathbb{Z}(\mathcal{H}_{k+1})). 
\end{eqnarray}
Since the associated simplicial complex of $\mathcal{H}$ is $\Delta[n]$, we have $\mathcal{H}_n=\{\Delta^n\}$ and 
\begin{eqnarray}
\partial_n\Delta^n&=&\sum_{j=0}^n(-1)^j \Delta^{n-1}_j.\label{eq9.5.2.2}
\end{eqnarray}
Since $n\geq 2$,  we have
\begin{eqnarray}\label{eq9.5.2.8}
\text{Ker}\partial_{n-1}=\mathbb{Z}(\sum_{j=0}^n\Delta^{n-1}_j). 
\end{eqnarray}
Moreover, since
$
\mathcal{H}_{n-1}\subseteq \{\Delta_j^{n-1}\mid j=0,1,\cdots,n\} 
$, we consider the next two cases. 

{\sc Case~1}. $\mathcal{H}_{n-1}\subsetneq \{\Delta_j^{n-1}\mid j=0,1,\cdots,n\}. 
$

Then by (\ref{eq9.5.2.8}),  we have 
\begin{eqnarray*}
\text{Ker}(\partial_{n-1}|_{\mathbb{Z}(\mathcal{H}_{n-1})})=0. 
\end{eqnarray*}
It follows from (\ref{eq9.5.2.1}) that $H_{n-1}(\mathcal{H})=0$. 

{\sc Case~2}. $\mathcal{H}_{n-1}= \{\Delta_j^{n-1}\mid j=0,1,\cdots,n\}. 
$

Then by (\ref{eq9.5.2.8}),  we have 
\begin{eqnarray}\label{eq9.5.2.3}
\text{Ker}(\partial_{n-1}|_{\mathbb{Z}(\mathcal{H}_{n-1})})=\mathbb{Z}(\sum_{j=0}^n\Delta^{n-1}_j). 
\end{eqnarray}
Moreover, with the help of (\ref{eq9.5.2.2}), we have 
\begin{eqnarray}\label{eq9.5.2.4}
\partial_n(\mathbb{Z}(\mathcal{H}_n))=\mathbb{Z}(\sum_{j=0}^n\Delta^{n-1}_j),  
\end{eqnarray}
which is a submodule of $\mathbb{Z}(\mathcal{H}_{n-1})$. 
It follows from (\ref{eq9.5.2.1}),  (\ref{eq9.5.2.3}) and   (\ref{eq9.5.2.4}) that $H_{n-1}(\mathcal{H})=0$. 

Summarising {\sc Case~1} and {\sc Case~2}, we have $H_{n-1}(\mathcal{H})=0$.   The assertion follows. 
\end{proof}

For any hypergraph $\mathcal{H}$, we will construct an acyclic hypergraph $\mathcal{H}'$ which contains $\mathcal{H}$ and has the same embedded homology with $\mathcal{H}$, in the next theorem. 

\begin{theorem}\label{pr9.5.2.3}
For any hypergraph $\mathcal{H}$, there exists an acyclic hypergraph $\mathcal{H}'$ such that

(i). $K_{\mathcal{H}'}=\Delta[n]$ for some $n\geq 2$;

(ii).   $\mathcal{H}\subsetneq \mathcal{H}'$;

(iii). $H_*(\mathcal{H}')\cong H_*(\mathcal{H})$.
\end{theorem}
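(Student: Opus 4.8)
The plan is to build $\mathcal{H}'$ by adjoining to $\mathcal{H}$ a single hyperedge that consists of all the vertices of $\mathcal{H}$ together with two brand-new vertices, so that this one hyperedge spans the whole ambient simplex while sitting in a degree too high to be seen by the embedded homology. Concretely, write $V_\mathcal{H}=\{v_0,\dots,v_m\}$, introduce two new vertices $w_1,w_2\notin V_\mathcal{H}$, put $\tau=\{v_0,\dots,v_m,w_1,w_2\}$ and define $\mathcal{H}'=\mathcal{H}\cup\{\tau\}$, with $n:=m+2\geq 2$. Since the single hyperedge $\tau$ already contains all $n+1$ vertices of $\mathcal{H}'$, every subset of $\tau$ is a simplex of $K_{\mathcal{H}'}$ and nothing else is, so $K_{\mathcal{H}'}=\Delta[n]$; this gives (i). Statement (ii) is immediate, since $\tau$ uses the new vertices and hence $\tau\notin\mathcal{H}$, giving $\mathcal{H}\subsetneq\mathcal{H}'$. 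For acyclicity I would simply invoke Theorem~\ref{th9.5.2}~(b) (equivalently the corollary immediately following it): $K_{\mathcal{H}'}=\Delta[n]$ is the associated simplicial complex of a single simplex, so $\mathcal{H}'$ is acyclic.

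The substance of the argument is (iii), for which I would prove the stronger claim that $\text{Inf}_*(\mathcal{H}')=\text{Inf}_*(\mathcal{H})$ as chain complexes, after which the homologies agree on the nose. Using Proposition~\ref{p9.2.88}, I have $\text{Inf}_k(\mathcal{H}')=G(\mathcal{H}'_k)\cap\partial_k^{-1}(G(\mathcal{H}'_{k-1}))$. Because $\tau$ is the only hyperedge of $\mathcal{H}'$ meeting $\{w_1,w_2\}$ and $\dim\tau=m+2$, while $\mathcal{H}_k=\emptyset$ for $k>m$, we get $\mathcal{H}'_k=\mathcal{H}_k$ for all $k\leq m+1$ and $\mathcal{H}'_{m+2}=\{\tau\}$. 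Hence in every degree $k\leq m+1$ the infimum groups of $\mathcal{H}'$ and $\mathcal{H}$ literally coincide.

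It remains to handle the top degree $m+2$, and here is the key point. Every codimension-one face $d_i\tau$ of $\tau$ still contains at least one of $w_1,w_2$, so no $d_i\tau$ lies in $\mathcal{H}'_{m+1}$. Therefore, for any $k\neq 0$, the chain $\partial_{m+2}(k\tau)$ has nonzero coefficients on faces outside $G(\mathcal{H}'_{m+1})$, and so $k\tau\notin\partial_{m+2}^{-1}(G(\mathcal{H}'_{m+1}))$. This forces $\text{Inf}_{m+2}(\mathcal{H}')=0=\text{Inf}_{m+2}(\mathcal{H})$. Combining the two ranges, the infimum chain complexes are identical and $H_*(\mathcal{H}')\cong H_*(\mathcal{H})$, establishing (iii).

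The only delicate step—and the one I would state most carefully—is this top-degree vanishing for $\text{Inf}_{m+2}$: it rests on the fact that removing a single vertex from $\tau$ always leaves at least one new vertex behind, which is precisely why the construction is insensitive to whether $\{v_0,\dots,v_m\}$ happens already to be a hyperedge of $\mathcal{H}$. I add \emph{two} new vertices rather than one only to guarantee $n=m+2\geq 2$ uniformly (in the degenerate case $m=0$ a single new vertex would give $n=1$); the homological argument itself goes through with either choice. Everything else is routine bookkeeping with the definitions of $K_{\mathcal{H}'}$ and of the infimum chain complex.
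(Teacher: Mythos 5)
Your construction is identical to the paper's: adjoin two new vertices $x,y$ and the single hyperedge $\sigma=V_\mathcal{H}\cup\{x,y\}$, deduce (i) and acyclicity from the fact that every hyperedge sits inside $\sigma$, and get (iii) because the new hyperedge lives two degrees above $\dim\mathcal{H}$ so the infimum chain complex is unchanged in low degrees and vanishes at the top. The paper's verification of (iii) is terser (it just records $\mathcal{H}'_i=\mathcal{H}_i$ for $i\leq n-2$, $\mathcal{H}'_{n-1}=\emptyset$, $\mathcal{H}'_n=\{\sigma\}$), but your more explicit check that $\mathrm{Inf}_{n}(\mathcal{H}')=0$ is the same computation spelled out, so the two proofs agree in substance.
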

\begin{proof}
Given a hypergraph $\mathcal{H}$, by adding  extra vertices $x,y$ to $V_\mathcal{H}$ and letting $\sigma=V_\mathcal{H}\cup\{x,y\}$, we have a hypergraph $\mathcal{H}'=\mathcal{H}\cup\{\sigma\}$, which satisfies (ii). Any hyperedge of $\mathcal{H}'$ is a subset of $\sigma$.  Hence if $\sigma$ is an $n$-hyperedge, then $K_{\mathcal{H}'}=\Delta[n]$. We obtain (i).  Moreover, $\mathcal{H}'_{i}=\mathcal{H}_i$ for any $0\leq i\leq n-2$,   $\mathcal{H}'_{n-1}$ is empty and $\mathcal{H}_n'=\{\sigma\}$.  Therefore, $H_i(\mathcal{H}')\cong H_i(\mathcal{H})$ for any $0\leq i\leq n-2$, while both $H_{n-1}(\mathcal{H}')$ and $H_n(\mathcal{H}')$ are zero. We obtain (iii). 
\end{proof}

The embedded homology of hypergraphs gives richer information than the associated simplicial complex.  An acyclic hypergraph whose associated simplicial complex is $\Delta[n]$ may have  highly non-trivial embedded homology,  as shown in  the next theorem. 

\begin{theorem} \label{th9.5.2.8}
For any $m\geq 1$ and any finitely-generated abelian groups $G_1,\cdots, G_{m}$,   there exists an acyclic hypergraph $\mathcal{H}$ such that 

(i). $K_\mathcal{H}=\Delta[n]$, where $n$ is the dimension of $\mathcal{H}$;

(ii). $n$ is less than or equal to $m+3$;

(iii).  $H_i(\mathcal{H})=G_i$ for $1\leq i\leq m$;

(iv). $\mathcal{H}_0\cup\mathcal{H}_1$ is connected.
\end{theorem}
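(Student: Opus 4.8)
The plan is to assemble $\mathcal{H}$ from independent local pieces, one for each prescribed degree, and then to cap the whole configuration with a single top hyperedge so that its associated complex becomes a full simplex. First I would, for each $i$ with $1\le i\le m$, construct a hypergraph $\mathcal{L}_i$ on its own block of vertices $V_i$ whose embedded homology is $G_i$ concentrated in degree $i$ and vanishing in the remaining degrees $1,\dots,m$. Placing these blocks on pairwise disjoint vertex sets and setting $\mathcal{H}_0=\bigcup_{i=1}^m\mathcal{L}_i$, the hypothesis of Theorem~\ref{c9.3.3.1} holds vacuously (distinct blocks meet in the empty set), so the Mayer--Vietoris sequence degenerates into $H_j(\mathcal{H}_0)\cong\bigoplus_{i=1}^m H_j(\mathcal{L}_i)$ for $j\ge 1$; hence $H_i(\mathcal{H}_0)\cong G_i$ for $1\le i\le m$.

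To build each $\mathcal{L}_i$ I would work directly from the description of the embedded homology in Proposition~\ref{p9.3.2.1},
\[
H_i(\mathcal{H})\cong \text{Ker}(\partial_i|_{G(\mathcal{H}_i)})/\bigl(G(\mathcal{H}_i)\cap \partial_{i+1}(G(\mathcal{H}_{i+1}))\bigr),
\]
decomposing $G_i=\mathbb{Z}^{r_i}\oplus\mathbb{Z}/d_{i,1}\oplus\cdots\oplus\mathbb{Z}/d_{i,s_i}$ and realising the summands on disjoint sub-blocks. A free summand $\mathbb{Z}$ is cheap: take all $i$-faces of an $(i+1)$-simplex as hyperedges but omit the $(i+1)$-face, so that the fundamental $i$-cycle lies in $\text{Ker}(\partial_i|_{G(\mathcal{H}_i)})$ with no admissible filling and contributes a free generator. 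A torsion summand $\mathbb{Z}/d$ is the delicate ingredient: I would design a Moore-type block carrying a single $i$-cycle $z\in G(\mathcal{H}_i)$ together with $(i+1)$-hyperedges whose admissible boundaries fill $d\,z$ but no smaller multiple, so that $G(\mathcal{H}_i)\cap\partial_{i+1}(G(\mathcal{H}_{i+1}))=\mathbb{Z}\cdot d\,z$ and the quotient in degree $i$ equals $\mathbb{Z}/d$. Each sub-block is arranged to have no homology in the other degrees $1,\dots,m$, so the disjoint-union computation assembles $H_i(\mathcal{L}_i)=G_i$.

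Next I would turn $\mathcal{H}_0$ into the desired $\mathcal{H}$ essentially as in the capping construction of Theorem~\ref{pr9.5.2.3}: first I adjoin every vertex as a $0$-hyperedge and a spanning tree of $1$-hyperedges joining the blocks --- a tree introduces no new $1$-cycle, hence leaves every $H_i$ unchanged while making the $1$-skeleton $\mathcal{H}_0\cup\mathcal{H}_1$ connected for (iv) (cf. Proposition~\ref{pr9.3.5}) --- and then I adjoin two auxiliary vertices $x,y$ together with the single top hyperedge $\sigma=V_{\mathcal{H}_0}\cup\{x,y\}$. Since every hyperedge is contained in $\sigma$, the associated simplicial complex is $K_{\mathcal{H}}=\Delta[n]$ with $n=\dim\mathcal{H}$, which is (i); and because $\mathcal{H}$ contains its top simplex, repeatedly applying (O2) deletes every proper hyperedge and then (O1) empties the remaining simplex, so $\mathcal{H}$ is acyclic. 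The auxiliary vertices ensure $\dim\sigma\ge m+2$, so that $\mathcal{H}_i=(\mathcal{H}_0)_i$ for all $i\le m$ and Theorem~\ref{pr9.5.2.3} gives $H_i(\mathcal{H})\cong H_i(\mathcal{H}_0)\cong G_i$ for $1\le i\le m$, which is (iii).

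The hard part will be the dimension accounting behind (ii): I must realise every summand, the connecting tree and the top hyperedge while keeping the total vertex count, and hence $n$, within budget. The free summands cost almost nothing, but the torsion blocks are the genuine bottleneck, since forcing the intersection $G(\mathcal{H}_i)\cap\partial_{i+1}(G(\mathcal{H}_{i+1}))$ to be \emph{exactly} $\mathbb{Z}\cdot d\,z$ --- rather than some larger sublattice that would wipe out the torsion --- is both the most delicate verification and the part that consumes auxiliary faces. The core of the argument is therefore the economical design of these Moore-type blocks and the careful check, through the formula above, that neither the gluing tree nor the cap perturbs any $H_i$ with $i\le m$.
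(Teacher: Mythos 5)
Your architecture---degree-wise Moore-type pieces assembled on disjoint vertex blocks, then capped by a single top hyperedge $\sigma=V\cup\{x,y\}$ as in Theorem~\ref{pr9.5.2.3}---is essentially the paper's. The gap is that you leave the one genuinely nontrivial ingredient, the torsion blocks, as an intention rather than a construction: designing hyperedges so that $G(\mathcal{H}_i)\cap\partial_{i+1}(G(\mathcal{H}_{i+1}))$ is exactly $\mathbb{Z}\cdot d\,z$ is delicate (a single $(i+1)$-simplex only ever contributes faces with coefficients $\pm1$, so producing $d\,z$ and nothing smaller requires a nontrivial configuration of many $(i+1)$-cells). The paper sidesteps this entirely by taking a simplicial-complex triangulation of the Moore space $M(G_i,i)$, with cells only in dimensions $0$, $i$, $i+1$, and a simplicial model of the wedge $\bigvee_i M(G_i,i)$; for a simplicial complex the embedded homology is the ordinary simplicial homology, so no hypergraph-specific verification is needed, and the wedge is automatically connected, replacing your spanning tree. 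You should do the same rather than build bespoke hypergraph blocks.

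Your worry about (ii) is well founded, and you should not expect to engineer it away. Since $K_\mathcal{H}=\Delta[n]$ forces $|V_\mathcal{H}|=n+1$, and the cap $\sigma=V\cup\{x,y\}$ yields $n=|V|+1$, the parameter $n$ is governed by the \emph{total vertex count} of the blocks, not by their dimension; with the budget $n\le m+3$ one has at most $m+4$ vertices, and there are only finitely many hypergraphs on a fixed vertex set, so conditions (i)--(iii) cannot simultaneously hold for arbitrary $G_1,\dots,G_m$ (already $m=1$, $G_1=\mathbb{Z}^{100}$ would require $\operatorname{rank} H_1\le\binom{5}{2}=10$). The paper's own proof asserts at exactly this point that $n$ exceeds $\dim K$ by $2$, which does not follow from its Theorem~\ref{pr9.5.2.3}, where $n=|V_K|+1$. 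So the obstruction you flagged as ``the hard part'' is real; it is a defect of the statement and of the paper's argument rather than something missing from your outline, and the remainder of your plan (capping for (i) and acyclicity, Proposition~\ref{pr9.3.5} for (iv), Theorem~\ref{c9.3.3.1} or the evident splitting of chain complexes for the direct sum) is sound once (ii) is weakened or dropped.
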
  

\begin{proof}
For any finitely-generated abelian groups $G_1,\cdots, G_{m}$,  we let $M(G_i,i)$ be the Moore space (cf.  \cite[p. 143]{hatcher}) such that 
\begin{itemize}
\item
 $H_i(M(G_i, i))=G_i$; 
\item
 $H_j(M(G_i,i))=0$ for any $j\neq i$, $j\geq 1$;
\item
 $M(G_i,i)$ is connected; 
\item
 $M(G_i,i)$ has cells only in dimension $0$, $i$ and $i+1$.
\end{itemize}
\noindent Let $K$ be a simplicial complex model for the wedge sum of  $M(G_i,i)$, $1\leq i \leq m$. Then   $K$ satisfies $H_i(K)=G_i$ for $1\leq i\leq m$, and $K$ is a connected  simplicial complex whose dimension is less than or equal to $m+1$.   By Theorem~\ref{pr9.5.2.3}, there exists an acyclic  hypergraph $\mathcal{H}$ containing $K$ such that  $K_\mathcal{H}=\Delta[n]$ where $n$ is the dimension of $\mathcal{H}$,  $n$ is greater than the dimension of $K$ by $2$,   and   $H_i(\mathcal{H})\cong H_i(K)$ for any $i\geq 0$.  
With the help of Proposition~\ref{pr9.3.5},  $\mathcal{H}_0\cup\mathcal{H}_1$ is connected. The assertion follows. 
\end{proof}

\begin{remark}
By Proposition~\ref{ex9.5.2.1}, we see that if the abelian group $G_m$ is non-trivial, then the dimension of $\mathcal{H}$ in Theorem~\ref{th9.5.2.8} is greater than or equal to $m+2$.  Hence by Theorem~\ref{th9.5.2.8} (iii),  the dimension of $\mathcal{H}$ is either $m+2$ or $m+3$. 
\end{remark}

\section{Applications of the Embedded Homology in Data Analysis of Hyper-networks}\label{sec9.6}

Hypergraphs are   mathematical models of hyper-networks, which has significant applications in the data analysis of engineering, technology, economics, marketing, etc.     A hyper-network is a system consisting of players/items as well as relations among the  players/items.  For example, if we take all the google users in the world as players,  and assign a relation among the google users whenever they are in a   google group,  then  we get a hyper-network.    If we use a vertex  to represent a player/item  and use a hyperedge to represent a relation among the players/items, then we get a hypergraph model for a hyper-network.

In this section, by applying the embedded homology of hypergraphs, we construct the following indices:  a hyper-network connectivity index to measure the connectivity of the vertices of a hyper-network, a hyper-network differentiation index to measure the differentiation of the vertices of a hyper-network with respect to certain functions on the vertices, and a hyper-network correlation index to measure the correlation between two functions on the vertices of a hyper-network. This section is speculation-based and contains no mathematical results. Nevertheless,  the indices constructed in this section are possible to have potential applications in data analysis of hyper-networks. 

\smallskip

\subsection{The Hyper-network Connectivity Index}


Let $\mathcal{H}$ be a hypergraph. The connectivity of $\mathcal{H}$ measures how intimately the vertices of $\mathcal{H}$ are connected with each other by the hyperedges in $\mathcal{H}$. In recent years, the connectivity of $\mathcal{H}$ has been investigated from various aspects, for example,  \cite{c22,c3, c4,c1}.  Among these references, the connectivity is characterized from a homological aspect in \cite{c22}. We apply the $0$-th embedded homology of hypergraphs and give a {connectivity index} to measure the connectivity of $\mathcal{H}$.   

\smallskip

Let $\mathcal{H}^0=\mathcal{H}$. For any $k\geq 1$, by an induction on $k$, we define a sequence of operations (R$k$) and a sequence of hypergraphs $\mathcal{H}^k$ as follows:

\begin{quote}
(R$k$). For a vertex $v$ of $\mathcal{H}^{k-1}$, if there exist exactly $k$ hyperedges $\sigma_1,\cdots,\sigma_k$ in $\mathcal{H}^{k-1}$ such that $v$ is a vertex of each $\sigma_i$, $i=1,\cdots,k$, then we remove $v$ from each of $\sigma_1,\cdots,\sigma_k$. 
\end{quote}

\begin{quote}
($\mathcal{H}^k$). By taking the operation (R$k$) on the hypergraph $\mathcal{H}^{k-1}$ repeatedly until the hypergraph cannot be reduced by (R$k$) anymore, we obtain a hypergraph $\mathcal{H}^k$. 
\end{quote}

\noindent We define the {\bf hyper-network connectivity index} of  $\mathcal{H}$ to be the number
\begin{eqnarray*}
\text{Conn}(\mathcal{H})=\sum_{k\geq 0}\frac{\dim H_0(\mathcal{H}^k;\mathbb{Q})}{2^{k+1}\cdot |V_{\mathcal{H}^{k}}|},
\end{eqnarray*}
where $|\cdot|$ denotes the cardinality of a set. Then $\text{Conn}(\mathcal{H})$ is a positive number smaller than or equal to $1$, which reflects how intimately the vertices of $\mathcal{H}$ are connected by the hyperedges.  As $\text{Conn}(\mathcal{H})$ increases, the connectivity of the vertices of $\mathcal{H}$ becomes less significant.  In particular,  if $\text{Conn}(\mathcal{H})=1$, then $V_\mathcal{H}$ is totally discrete and there is no hyperedge  in $\mathcal{H}$ containing more than one point. 

\begin{example}\label{ex9.6.1.1}
Let each point   represents a person. Let  $G$ be the graph constructed by connecting two points  whenever the corresponding two persons are in parents-children relations or in spouse relations.  Then $\text{Conn}(G)$ can be used to measure how integrated the community is. If $\text{Conn}(G)$ becomes smaller/larger than before, then we conclude that the community becomes more/less integrated. 
\end{example}

\subsection{The Hyper-network Differentiation Index}

Let $\mathcal{H}$ be a hypergraph and let $\varphi: V_\mathcal{H}\longrightarrow [0,1]$ be a function on the vertices of $\mathcal{H}$. The problem how $V_\mathcal{H}$ is differentiated with respect to $\varphi$ and the relations given as hyperedges of $\mathcal{H}$ is investigated in \cite{d1} from a perspective of dynamical systems.  And the differentiation phenomenon of stocks in the financial market network is studied in \cite{c2}. We give a hyper-network differentiation index to measure how $V_\mathcal{H}$ is differentiated by using the embedded homology of hypergraphs.

\smallskip

For any $t\in [0,1]$, let $\mathcal{H}(t)$ be the hypergraph consisting of all the hyperedges of $\mathcal{H}$ whose vertices $v$  satisfy $\varphi(v)\geq t$.  That is, 
\begin{eqnarray*}
\mathcal{H}(t)=\Delta[v\in V_\mathcal{H}\mid \varphi(v)\geq t]\cap\mathcal{H}. 
\end{eqnarray*}
For any $i\geq 0$ and $n\geq 1$, the sequence of the dimension of the embedded homology  
\begin{eqnarray}\label{eq9.6.1.1}
\dim H_i(\mathcal{H}(k/n);\mathbb{Q}),\text{\ \ \ } k=0,1,\cdots,n
\end{eqnarray}
is a barcode (that is, a step function of one  variable which is a finite sum of constant functions on intervals). Letting $n\to \infty$, since $V_\mathcal{H}$ is finite, the barcode (\ref{eq9.6.1.1})  stabilises for $n$ sufficiently large.  We denote the limit barcode  as the following    function  
\begin{eqnarray*}
f_{i,\varphi}: [0,1]\longrightarrow \mathbb{R}_{\geq 0}. 
\end{eqnarray*}
Let $\gamma$ be a random variable in the function space
\begin{eqnarray*}
W_\varphi&=&\{\gamma: V_\mathcal{H}\longrightarrow [0,1]\mid \text{ for any } t\in [0,1], \text{ the number of vertices } v\in V_\mathcal{H} \\
&&\text{ such that } \varphi(v)\geq t \text{ equals to the number of vertices } v\in V_\mathcal{H} \text{ such that } \gamma(v)\geq t \}.
\end{eqnarray*}
We denote $E$ as the expectation and define $f_{i,\gamma}$ in the same way as $f_{i,\varphi}$ by only substituting $\varphi$ with $\gamma$.  Then  the degree of fitness
\begin{eqnarray*}
\text{Fit}(f_{i,\varphi}, E(f_{i,\gamma}\mid \gamma\in W_\varphi))
\end{eqnarray*}
 reflects the differentiation of $V_\mathcal{H}$ with respect to $\varphi$ and $\mathcal{H}$. Moreover, we let
\begin{eqnarray*}
\text{Diff}(\varphi, \mathcal{H})=\sum_{i\geq 0}  \frac{\text{Fit}(f_{i,\varphi}, E(f_{i,\gamma}\mid \gamma\in W_\varphi))}{2^{i+1}}. 
\end{eqnarray*}
We call $\text{Diff}(\varphi,\mathcal{H})$ the {\bf hyper-network differentiation index} of $\varphi$ with respect to $\mathcal{H}$. It is a number between $0$ and $1$.   As the number  $\text{Diff}(\varphi,\mathcal{H})$ increases,  the differentiation of  $V_\mathcal{H}$   with respect to $\varphi$ and  $\mathcal{H}$ becomes more significant.  

\begin{remark}
Given two non-negative  measurable functions $\beta_1,\beta_2$ on a measure space $X$ such that  
\begin{eqnarray*}
0<||\beta_1||_2,||\beta_2||_2<\infty  
\end{eqnarray*}
where $||\cdot||_2$  is the $L^2$-norm of a function,  we define the {\bf degree of fitness} between  $\beta_1$  and $\beta_2$ as 
\begin{eqnarray*}
\text{Fit}(\beta_1,\beta_2)=\frac{||\beta_1-\beta_2||_2}{||\beta_1||_2+ ||\beta_2||_2}. 
\end{eqnarray*}
The degree of fitness is a number between $0$ and $1$.  Smaller $\text{Fit}(\beta_1,\beta_2)$ means more significance 
of   the fitness   between  $\beta_1$  and  $\beta_2$.  
\end{remark}

\begin{example}\label{ex9.6.2.1}
Let  $G$ be the graph given in Example~\ref{ex9.6.1.1}. 
For any $n\geq 0$, whenever the points $v_0,\cdots, v_n$ of $G$ form a loop (a single point is regarded as a trivial loop), we give a hyperedge $\{v_0,\cdots,v_n\}$ in $\mathcal{H}$. 
Let $\varphi$ be a function with value in $[0,1]$ measuring the annual income/social status/personal property/education level of people. Then our  hyper-network differentiation index  $\text{Diff}(\varphi,\mathcal{H})$  can measure the social differentiation and mobility.  If $\text{Diff}(\varphi,\mathcal{H})$ becomes smaller/larger than  before, then we can conclude that the social differentiation decreases/increases. 
\end{example}

\subsection{The Hyper-network Correlation Index}

Correlation analysis  can be conducted on networks, for example, \cite{r1,r2}.  Let $\mathcal{H}$ be a hypergraph and let $\varphi, \psi: V_\mathcal{H}\longrightarrow [0,1]$ be two functions on the vertices of $\mathcal{H}$. To measure the correlation between $\varphi$ and $\psi$ with respect to the relations on $V_\mathcal{H}$ given as hyperedges of $\mathcal{H}$, we give a hyper-network correlation index  by using the embedded homology of hypergraphs.

\smallskip

For any $0\leq t,s\leq 1$, let $\mathcal{H}(t,s)$ be the hypergraph consisting of all the hyperedges of $\mathcal{H}$ whose  vertices $v$ satisfy $\varphi(v)\geq t$ and $\psi(v)\geq s$.  That is,
\begin{eqnarray*}
\mathcal{H}(t,s)=\Delta[v\in V_\mathcal{H}\mid \varphi(v)\geq t \text{ and } \psi(v)\geq s ]\cap\mathcal{H}. 
\end{eqnarray*}
For any $i\geq 0$ and $n\geq 1$, the sequence of the dimension of the embedded homology  
\begin{eqnarray}\label{eq9.6.2.1}
\dim H_i(\mathcal{H}(k/n, l/n);\mathbb{Q}),\text{\ \ \ } k,l=0,1,\cdots,n
\end{eqnarray}
is a $2$-dimensional barcode  (that is, a step function of two variables which is a finite sum of constant functions on squares).  Letting $n\to \infty$, since $V_\mathcal{H}$ is finite,   the $2$-dimensional barcode  (\ref{eq9.6.2.1})  stabilises when $n$ is sufficiently large.  We denote the limit barcode as the following function 
\begin{eqnarray*}
g_{i,\varphi,\psi}: [0,1]\times [0,1]\longrightarrow \mathbb{R}_{\geq 0}. 
\end{eqnarray*}
Let $\gamma_1\in W_\varphi$ and $\gamma_2\in W_\psi$ be independent random variables.  We define $g_{i,\gamma_1,\gamma_2}$ in the same way as $g_{i,\varphi,\psi}$ by only substituting $\varphi$ with $\gamma_1$ and substituting $\psi$ with $\gamma_2$.  Then the  degree of fitness  of $2$-variable functions
\begin{eqnarray*}
\text{Fit}(g_{i,\varphi,\psi}, E(g_{i,\gamma_1,\gamma_2}\mid \gamma_1\in W_\varphi,\gamma_2\in W_\psi))
\end{eqnarray*}
 reflects the correlation of $\varphi$ and $\psi$ on $V_\mathcal{H}$ with respect to  the hyperedges in $\mathcal{H}$. Moreover, we let
\begin{eqnarray*}
\text{Corr} (\varphi, \psi, \mathcal{H})=\sum_{i\geq 0}  \frac{\text{Fit}(g_{i,\varphi,\psi}, E(g_{i,\gamma_1,\gamma_2}\mid \gamma_1\in W_\varphi,\gamma_2\in W_\psi))}{2^{i+1}}. 
\end{eqnarray*}
We call $\text{Corr}(\varphi,\psi, \mathcal{H})$ the {\bf hyper-network correlation index} of $\varphi$ and $\psi$ with respect to $\mathcal{H}$.  It is a number between $0$ and $1$.  As the number $\text{Corr}(\varphi,\psi, \mathcal{H})$ increases, the correlation  between $\varphi$ and $\psi$ with respect to $\mathcal{H}$ becomes more significant.

\begin{example}\label{ex9.6.3.1}
Let 
$\mathcal{H}$ be the hypergraph given in Example~\ref{ex9.6.2.1}. 
Let $\varphi$ be a function with value in $[0,1]$ measuring the education level of a person. Let $\psi$ be a function with value in $[0,1]$ measuring the annual income of a person.  Then our  hyper-network correlation index  $\text{Corr}(\varphi,\psi, \mathcal{H})$  can measure the correlation between the education level and the annual income with the consideration of social relations. If $\text{Corr}(\varphi,\psi,\mathcal{H})$ becomes smaller/larger than  before, then we can conclude that education level becomes less/more related to annual income. 
\end{example}

\bigskip

{\noindent {\bf Acknowledgement}.  This work is supported by the National Research Foundation, Prime Minister's Office, Singapore under its Campus for Research Excellence and Technological Enterprise (CREATE) programme.  The authors would like to express their deep gratitude to the reviewer(s)  for their careful reading, valuable comments,  and helpful suggestions.
}

\vspace{0.88cm}

\vspace{0.88cm}

Stephane Bressan

School of Computing, National University of Singapore, Singapore, 117417. 

e-mail: steph@nus.edu.sg 
 
 \bigskip
 
 Jingyan Li

Department of Mathematics and Physics, Shijiazhuang Tiedao University, China, 050043. 

e-mail: yanjinglee@163.com

\bigskip


  \bigskip
 
 Shiquan Ren (for correspondence)
 
$^{\it a}$ School of Mathematics and Computer Science, Guangdong Ocean University, 1 Haida Road, Zhanjiang, China, 524088.
 
 $^{\it b}$ Department of Mathematics,   National University of Singapore, Singapore, 119076.

e-mail: sren@u.nus.edu 

\bigskip

Jie Wu

  Department of Mathematics,   National University of Singapore, Singapore, 119076.

e-mail: matwuj@nus.edu.sg







\begin{thebibliography}{99}

\bibitem{berge}
C. Berge, \emph{Graphs and hypergraphs}, North-Holland, New York, 1976. 

\bibitem{r1}
G. Bonanno, G. Caldarelli, F. Lillo, and R.N. Mantegna, \emph{Topology of correlation-based minimal spanning trees in real and model markets}, Phys. Rev. E {\bf 68}, 2003, 046130. 

\bibitem{acyclic1}
Y.M. Chee, L. Ji, A. Lim and A.K.H. Tung, \emph{Arboricity: An acyclic hypergraph decomposition problem motivated by database theory}. Discrete Appl. Math. {\bf 160} 2012, pp. 100-107. 




\bibitem{hg}
F.R.K. Chung and  R.L. Graham, \emph{Cohomological aspects of hypergraphs}, Trans. Amer. Math. Soc. {\bf 334} (1), 1992, pp. 365-388. 


\bibitem{c22}
O. Cooley, P. Haxell, M. Kang and P. Spr\"{u}ssel, \emph{Homological connectivity of random hypergraphs}, preprint, 2016. https://arxiv.org/abs/1604.00842. 

\bibitem{c3}
M. Dewar, D. Pike and J. Proos, \emph{Connectivity in hypergraphs}, preprint, 2016. https://arxiv.org/abs/1611.07087. 

\bibitem{betti}
E. Emtander, \emph{Betti numbers of hypergraphs}, Commun. Algebra {\bf 37} (5), 2009, pp. 1545-1571.

\bibitem{yau3}
A. Grigor'yan, Y. Lin, Y. Muranov and S.T. Yau,  \emph{Cohomology of digraphs and (undirected) graphs}, Asian J.
Math. {\bf 19}, 2015, pp. 887-932.




\bibitem{yau1}
A. Grigor'yan, Y. Lin, Y. Muranov and S.T. Yau, \emph{Homologies of path complexes and digraphs}, preprint, 2012. http://arxiv.org/abs/1207.2834. 

\bibitem{yau4}
A. Grigor'yan, Y. Lin, Y. Muranov and S.T. Yau, 
\emph{Path complexes and their homologies},  preprint, 2015. https://www.math.uni-bielefeld.de/~grigor/dnote.pdf.

\bibitem{yau2}
A. Grigor'yan,   Y. Muranov and S.T. Yau, \emph{Graphs associated with simplicial complexes},  Homology, Homotopy Appl. {\bf 16} (1), 2014, pp. 295-311.
 
\bibitem{yau5}
A. Grigor'yan,   Y. Muranov and S.T. Yau, \emph{Cohomology theories of simplicial complexes, algebras, and
digraphs}, preprint, 2012.  https://www.math.uni-bielefeld.de/sfb701/files/preprints/sfb12123.pdf.

\bibitem{c4}
A. Guha, M.S. Pydi, B. Paria and A. Dukkipati, \emph{Analytic connectivity in general hypergraphs}, preprint, 2017. https://arxiv.org/abs/1701.04548. 


\bibitem{hatcher}
A. Hatcher, \emph{Algebraic topology}, Cambridge University Press, 2002. 
 
 
\bibitem{com1}

J. Johnson, \emph{Hyper-networks of complex systems}, in \emph{Complex Sciences},
Series Lecture Notes of the Institute for Computer Sciences, Social Informatics and Telecommunications Engineering {\bf 4}, 2009, pp. 364-375.
 
 \bibitem{c1}
T. Kir\'{a}ly,  \emph{Edge-connectivity of undirected and directed hypergraphs}, Ph.D Dissertation, E\"{o}tv\"{o}s Lor\'{a}nd University, Budapest, 2003. 

\bibitem{r2}
P. Langfelder and S. Horvath, \emph{WGCNA: an R package for weighted correlation network analysis}, 
BMC Bioinformatics, {\bf 9}, 2008,  559. 
 
 
 \bibitem{c2}
R.N.  Mantegna,  \emph{Hierarchical structure in financial markets}, Euro. Phys. J. B {\bf11}, 1999, pp. 193-197.
 
 \bibitem{parks}
A.D. Parks and S.L. Lipscomb, \emph{Homology and hypergraph acyclicity: a combinatorial invariant for hypergraphs}, Naval Surface Warfare Center, 1991. 

 

\bibitem{d1}
M. Saghafi, H. Dankowicz, and W. Tabor, \emph{Emergent task differentiation on network filters}, 
SIAM J. Appl. Dyn. Syst. {\bf16} (3), 2017, pp. 1686-1709.

 
\bibitem{disc2004}
 A. Zomorodian and G. Carlsson, \emph{Computing persistence Homology}, Discrete Comput. Geom.  {\bf 33} (2), 2005, pp. 247-274. 







\end{thebibliography}
\end{document}